\theoremstyle{plain}
\newtheorem{theorem}{Theorem}[section]
\newtheorem{lemma}[theorem]{Lemma}
\theoremstyle{definition}
\newtheorem{definition}[theorem]{Definition}
\newtheorem{example}[theorem]{Example}
\newtheorem{remark}[theorem]{Remark}
\numberwithin{equation}{section}
\newcommand{\CC}{\mathbb{C}}
\newcommand{\CP}{\mathbb{C}{\text P}}
\newcommand{\QQ}{\mathbb{Q}}
\newcommand{\RR}{\mathbb{R}}
\newcommand{\ZZ}{\mathbb{Z}}
\newcommand{\mult}{{\rm mult}}
\DeclareMathOperator{\PD}{PD}
\begin{document}

\title{Cohomology bases of toric surfaces}

\author{Xin Fu} 
\address{Beijing Institute of Mathematical Sciences and Applications, Beijing 101408, China} 
\email{x.fu@bimsa.cn} 

\author{Tseleung So} 
\address{School of Mathematical Sciences, University of Southampton, Southampton SO17 1BJ, United Kingdom} 
\email{larry.so.tl@gmail.com} 

\author{Jongbaek Song}
\address{Department of Mathematics Education, Pusan National University,  Busan, 46241, Republic of Korea}
\email{jongbaek.song@pusan.ac.kr}

\thanks{
This work was supported by a New Faculty Research Grant of Pusan National University~2023. X.\,Fu was supported by the Beijing Natural Science Foundation (grant no.\,1244043).
}

\subjclass[2020]{
 Primary: 57S12, 55N45;  Secondary: 57R18}

\keywords{toric surface, cellular cohomology, Chow ring, cup product, intersection product}

\maketitle 
\abstract 
Given a compact toric surface, the multiplication of its rational cohomology 
can be described in terms of the intersection products of Weil divisors, or in terms of the cup products of cohomology classes representing specific cells.
In this paper, we aim to compare these two descriptions. More precisely, we define two different cohomology bases, the \emph{Poincar\'{e} dual basis} and the \emph{cellular basis}, which give rise to matrices representing the intersection product and the cup product. We prove that these representing matrices are inverse of each other.
\endabstract

\section{Introduction}
When working with the cohomology of a variety, choosing a good basis not only facilitates the computation but also reveals intrinsic geometric, topological, and algebraic properties of the given variety. This insight is especially crucial in the study of multiplication structures of cohomology rings, where different bases can provide complementary perspectives on the underlying geometry. This paper aims to investigate these phenomena in the context of certain toric varieties. 

Toric varieties, a particularly rich class of algebraic varieties, are known for their deep connections to topology, combinatorics, representation theory, algebraic geometry and symplectic geometry. 
Cohomology is one way to see these connections as its algebraic structure reflects the geometry of the variety, for example information about divisor classes and their intersections; see for instance \cite[Chapter~5]{Ful}.

Among all toric varieties, we focus on complete toric surfaces, which are complex two-dimensional normal toric varieties associated with complete fans $\Sigma$, or equivalently associated with $2$-dimensional rational polytopes $P$. Depending on the combinatorial description, we write the corresponding toric surface as $X_\Sigma$ or~$X_P$. These surfaces may contain singularities, which makes them difficult to work with. Nevertheless, they remain accessible since their singularities are at worst orbifold singularities and their cohomological properties over the rational coefficients resemble those of smooth varieties, such as the Poincar\'e duality.

Let $\Sigma$ be a complete 2-dimensional fan with $(n+2)$ rays $\rho_1,\ldots,\rho_{n+2}$. Applying a basis change if necessary, we always assume $\rho_{n+1}=(1,0)$.
Let $X_{\Sigma}$ be the toric surface associated with $\Sigma$.
Then $H^i(X_{\Sigma};\QQ)$ is $\QQ$ for $i=0,4$, is $\QQ^n$ for $i=2$, and is trivial otherwise. After fixing a generator of $H^4(X_{\Sigma};\QQ)$, we regard the cup product as an inner product
\[
H^2(X_{\Sigma};\QQ)\otimes H^2(X_{\Sigma};\QQ)\to H^4(X_{\Sigma};\QQ)\cong\QQ,
\]
and represent it by a symmetric matrix by choosing a basis for $H^2(X_{\Sigma};\QQ)$.

For example, given a compact toric surface $X_\Sigma$, 
there is a ring isomorphism
\[
(A^*(X_{\Sigma})_{\QQ}, \,\cdot \,)\overset{\cong}{\longrightarrow}(H^*(X_{\Sigma};\QQ),\cup)
\]
between the rational Chow ring and the rational cohomology ring for $X_{\Sigma}$, where~$\cdot$ is the intersection product.
It is known that the rational equivalence classes of Weil divisors corresponding to  rays~$\rho_1,\ldots,\rho_n$ and the subvariety corresponding to a 2-dimensional cone $\sigma$ in $\Sigma$, say  
\[
\{[D_{\rho_1}],\ldots,[D_{\rho_n}];[V(\sigma)]\},\]
forms a basis for $A^{>0}(X_{\Sigma})_{\QQ}$. The ring structure of $A^*(X_{\Sigma})_{\QQ}\cong H^*(X_{\Sigma};\QQ)$ is then determined by the matrix
\[
M_{int}(X_{\Sigma})=(m_{ij})_{1\leq i,j\leq n}
\]
where $m_{ij}$ is given by $[D_{\rho_i}]\cdot [D_{\rho_j}]=m_{ij}[V(\sigma)]$, which we call the \emph{intersection product matrix}. 
A brief review on toric surfaces and their Chow rings is given in Section~\ref{sec_toric_surf}.

On the other hand, in recent work~\cite{FSS2} the authors combined ideas from homotopy theory and toric topology to derive a new formula for computing cup products in $H^*(X_{\Sigma};\QQ)$.\footnote{In fact, this formula holds for 4-dimensional toric orbifolds and a wider class of coefficients.} Roughly speaking, we defined a specific cellular structure for $X_{\Sigma}$, and used these cells to construct elements $u_i\in  H^2(X_{\Sigma};\QQ)$ for $i=1, \dots, n$ and $v\in H^4(X_{\Sigma};\QQ)$ such that 
\[
\{u_1, \dots, u_n ; v\}
\]
forms a basis for $\widetilde{H}^\ast(X_\Sigma; \QQ)$, which is called the \emph{cellular basis}.
The cup product of~$H^*(X_{\Sigma};\QQ)$ is then represented by the \emph{cellular cup product matrix}
\[
M_{cup}(X_{\Sigma})=(c_{ij})_{1\leq i,j\leq n}
\]
where $c_{ij}$ satisfies $u_i\cup u_j=c_{ij}v$, which was worked out in \cite[Theorem~1.3]{FSS2}. The construction of cellular bases and cellular cup product matrices will be described in Section~\ref{sec_cellular cohomology} and Appendix~\ref{Appendix_alg_cel_basis}.

This article aims to compare these two approaches by studying the relation between these two bases and their corresponding matrices. Our main result is Theorem~\ref{thm_main} and will be proved in Section~\ref{sec_cellular and poincare}.

\begin{theorem}[restated in Theorem~\ref{thm_main}]\label{thm_main_Intro}
Let $X_{\Sigma}$ be a toric surface associated with a 2-dimensional complete fan $\Sigma$.
Let $M_{int}(X_\Sigma)$ and $M_{cup}(X_\Sigma)$ be the intersection matrix and the cellular cup product matrix respectively. Then we have
\begin{equation}\label{eqn_main thm}
M_{int}(X_\Sigma)\cdot M_{cup}(X_\Sigma)=I.
\end{equation}
\end{theorem}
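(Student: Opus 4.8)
The plan is to recognize that both $M_{int}(X_\Sigma)$ and $M_{cup}(X_\Sigma)$ are Gram matrices of one and the same bilinear pairing on $H^2(X_\Sigma;\QQ)$, namely the cup product landing in $H^4(X_\Sigma;\QQ)\cong\QQ$, but expressed with respect to two different bases and two a priori different generators of $H^4$. Under the cycle class isomorphism $A^*(X_\Sigma)_\QQ\cong H^*(X_\Sigma;\QQ)$ the intersection product becomes the cup product, so $m_{ij}$ records $[D_{\rho_i}]\cup[D_{\rho_j}]$ in units of $[V(\sigma)]$, while $c_{ij}$ records $u_i\cup u_j$ in units of $v$. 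Since Poincar\'e duality holds rationally for these orbifolds, the pairing is nondegenerate and $M_{int}$ is invertible; the whole statement is then an instance of the linear-algebra fact that the Gram matrices of a nondegenerate symmetric form in two mutually dual bases are inverse to each other.

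Concretely, first I would fix the change-of-basis matrix $P$ by writing $u_j=\sum_k P_{kj}[D_{\rho_k}]$ in $H^2(X_\Sigma;\QQ)$, which is legitimate as $\{[D_{\rho_i}]\}$ and $\{u_i\}$ are both bases of the rank-$n$ space $H^2$. Expanding the cup product bilinearly gives $M_{cup}=\lambda\,P^{\top}M_{int}P$, where $\lambda$ is the scalar determined by $[V(\sigma)]=\lambda v$. The theorem then follows provided (a) the two generators agree, $\lambda=1$, and (b) the two bases are dual for the pairing, i.e. $[D_{\rho_i}]\cup u_j=\delta_{ij}\,v$; given (a), condition (b) reads $M_{int}P=I$, that is $P=M_{int}^{-1}$. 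Substituting into $M_{cup}=P^{\top}M_{int}P$ and using the symmetry of $M_{int}$ (so $(M_{int}^{-1})^{\top}=M_{int}^{-1}$) yields $M_{cup}=M_{int}^{-1}$, hence $M_{int}M_{cup}=I$.

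The substance of the argument is thus the duality lemma $[D_{\rho_i}]\cup u_j=\delta_{ij}\,v$ together with the normalization $[V(\sigma)]=v$, and this is exactly the step I expect to be the main obstacle. I would prove it via Poincar\'e duality: evaluating on the fundamental class, $\langle[D_{\rho_i}]\cup u_j,[X_\Sigma]\rangle=\langle u_j,\,[D_{\rho_i}]\cap[X_\Sigma]\rangle$, where $[D_{\rho_i}]\cap[X_\Sigma]\in H_2(X_\Sigma;\QQ)$ is the homology class carried by the divisor $D_{\rho_i}$. The point is then that the cellular classes $u_j$ are, by their construction in Section~\ref{sec_cellular cohomology} (and Appendix~\ref{Appendix_alg_cel_basis}), the cochains Kronecker-dual to the cellular $2$-cycles, and each such cycle is represented by a divisor $D_{\rho_i}$; comparing the two gives $\langle u_j,\,[D_{\rho_i}]\cap[X_\Sigma]\rangle=\delta_{ij}$. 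The delicate parts are to match the indexing and orientations of the cells with the divisors, to check that the transversal intersections occur with the correct rational multiplicities in the orbifold setting, and to confirm that the cellular top class $v$ and the point class $[V(\sigma)]$ coincide as generators of $H^4$; these are precisely the places where signs and the rational factors coming from the orbifold singularities must be tracked carefully.

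Finally, I would note that the same identity can instead be verified by brute force from the explicit combinatorial data: $m_{ij}$ is governed by the fan (consecutive rays meeting transversally, together with the two linear relations expressing $D_{\rho_{n+1}}$ and $D_{\rho_{n+2}}$ through $D_{\rho_1},\dots,D_{\rho_n}$), while $c_{ij}$ is given by \cite[Theorem~1.3]{FSS2}, whereupon one checks $\sum_k m_{ik}c_{kj}=\delta_{ij}$ directly. I prefer the duality route because it explains \emph{why} the two matrices are inverse rather than merely confirming it, but the computational route provides a useful independent check on the normalization constant $\lambda$.
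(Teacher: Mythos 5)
Your proposal follows, in substance, the same route as the paper: both arguments reduce the theorem to the assertion that the cellular basis and the Poincar\'e duals of the divisor classes are dual bases for the cup-product pairing, after which the matrix identity is elementary linear algebra. The paper packages the duality as Lemma~\ref{lemma_PD_of_x_i}, namely $u_i=\sum_j c_{ij}\PD^{-1}([D_{\rho_j}])$ and $v=\PD^{-1}([V])$, deduces $M_{cup}=M_{cup}\cdot M_{int}\cdot M_{cup}$, and cancels the invertible $M_{cup}$; you package it as $P=M_{int}^{-1}$ and compute $M_{cup}=P^{\top}M_{int}P=M_{int}^{-1}$. These are equivalent formulations, and your Gram-matrix bookkeeping (the role of the normalization $\lambda$, the symmetry of $M_{int}$, and the reduction of condition (b) to $M_{int}P=I$) is correct. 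Your normalization (a), $[V]=\PD(v)$, is also the easy half: the paper disposes of it in the main text.

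The genuine gap is condition (b), which you assert rather than prove, and it is not a detail: it is the entire technical content of the result. Unwinding your own reduction via Poincar\'e and Kronecker duality, $[D_{\rho_i}]\cup u_j=\delta_{ij}v$ is equivalent to the identity $[D_{\rho_i}]=e_i$ in $H_2(X_P;\QQ)$, i.e.\ equality of the homology class of the divisor, oriented as a complex suborbifold, with the cellular class $e_i$, whose orientation is fixed purely \emph{algebraically} in Appendix~\ref{Appendix_alg_cel_basis} --- through choices of generators such as \eqref{eq_u_i_orientation} and \eqref{eq_Theta_u_square} in Stanley--Reisner-type presentations of the model spaces $X_{(a_i,b_i)}$ and $Y_{(-1,0)}$, and through the rescaling pullback $u_i=\frac{g_i}{a_{n+2}b_{n+2}}\widetilde{\kappa}^*(u_i')$ in the general case. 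Since both classes are carried by the same sphere $\pi^{-1}(F_i)$, the issue is exactly the sign (and, in the non-smooth case, the rational normalization), and no soft ``comparison of the two'' can settle it, because the cellular orientations were never defined geometrically: one must push the classes through the maps $\eta_i$, $\Phi$, $\Psi$, $\Theta$, $\Upsilon$ and verify, case by case ($a_ib_i\neq 0$; $\lambda_i=(-1,0)$ or $(0,-1)$; general $\lambda_{n+2}$ via the rescaling morphism), that $\Phi(u_i)=\bigl[\sum_j c_{ij}x_j\bigr]$. This is the paper's entire Appendix~\ref{appendix_B}, and it cannot be waved through: if even one $e_i$ carried the opposite sign, $M_{cup}$ would be replaced by $DM_{cup}D$ for a diagonal sign matrix $D$, and the identity $M_{int}\cdot M_{cup}=I$ would fail (one can check this on Examples~\ref{ex_surface} and~\ref{ex_M_cup}). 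Your fallback suggestion --- verifying $\sum_k m_{ik}c_{kj}=\delta_{ij}$ directly from \eqref{eq_intersection_prod_2d}, the wall relations, and the explicit formula \eqref{eq_Mcup_entry}, using that $M_{int}$ is tridiagonal in the indices $1,\dots,n$ --- would genuinely sidestep the orientation problem, since both matrices are then given by fan data alone; but that computation is not carried out either, so as it stands the proposal identifies the right skeleton while leaving its load-bearing lemma unproved.
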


Due to Theorem~\ref{thm_main_Intro}, two bases $\{u_1,\ldots,u_n;v\}$ and $\{[D_{\rho_1}],\ldots,[D_{\rho_n}];[V(\sigma)]\}$ can be interpreted as a pair of dual bases in the following sense.
Using the equivalence class $[V(\sigma)]$ to identify $A^2(X_{\Sigma})_{\QQ}\cong\QQ$, we regard the pair $(A^*(X_{\Sigma})_{\QQ}, \,\cdot \, )$ as an inner product space and $M_{int}(X_{\Sigma})$ as the matrix representation of intersection product with respect to the basis $\{[D_{\rho_1}],\ldots,[D_{\rho_n}]\}$. Similarly, $(H^*(X_{\Sigma};\QQ),\cup)$ is an inner product space and $M_{cup}(X_{\Sigma})$ is the matrix representation of $\cup$ with respect to the cellular basis.
From this perspective, Equation~\eqref{eqn_main thm} implies that $(A^*(X_P)_{\QQ},\, \cdot\,)$ is the dual inner product space of $(H^*(X_P;\QQ),\cup)$.

\subsection*{Acknowledgment}
The authors would like to express our gratitude to Mikiya Masuda for bringing them the insightful question that ultimately motivated this work. The second author would like to thank Matthias Franz and Martin Pinsonnault for communications regarding his questions.

\section{Toric varieties and Chow rings}\label{sec_toric_surf}
\subsection{Toric varieties and orbit closures}
For an $n$-dimensional compact torus $T^n=(S^1)^n$, let $N$ be the lattice of one-parameter subgroups $S^1\to T^n$, and $M$ the lattice of characters $T^n\to S^1$.  For simplicity, we identify both~$N$ and $M$ with the standard lattice $\ZZ^n$ throughout this paper. Under this identification, the one parameter subgroup corresponding to $\mathbf{a}=(a_1, \dots, a_n)\in \ZZ^n$ is the map 
\[
\lambda^\mathbf{a} \colon S^1\to T^n,\quad t\mapsto (t^{a_1}, \dots, t^{a_n}).\] 
The character corresponding to $\mathbf{b}=(b_1, \dots, b_n)\in \ZZ^n$ is the map 
\[\chi^\mathbf{b} \colon T^n\to S^1,\quad
(t_1, \dots, t_n)\mapsto t_1^{b_1}\cdots t_n^{b_n}.\] Then the natural pairing $\left< ~,~\right> \colon M\times N \to \ZZ$ can be understood as the standard inner product of $\ZZ^n$. 

Given a full-dimensional rational polytope $P$ in $M\otimes_\ZZ \RR$ and its normal fan $\Sigma$ in~$N\otimes_\ZZ \RR$, we denote by $X_\Sigma$ (or $X_P$) the the toric variety associated with $\Sigma$ (or~$P$)\footnote{In general, a fan $\Sigma$ need not be the normal fan of a polytope $P$. However, we restrict our attention to this case for our main result.}. 
For each face $E$ of $P$, we denote by $\sigma_E$ the cone in $\Sigma$ generated by the primitive outward normal vectors of the facets of $P$ containing $E$ as a face. 
By translating~$P$ appropriately so that the origin in $M$ is a vertex of $E$, we may regard $E$ as a full-dimensional lattice polytope in $(\sigma_E^\perp \cap M)\otimes_\ZZ \RR$, where 
\[
\sigma_E^\perp\colonequals \{\mathbf{b}\in M\otimes_\ZZ \RR\mid \left<\mathbf{b}, \mathbf{a}\right>=0 \text{ for all } \mathbf{a}\in \sigma_E\}. 
\]
Let $N(\sigma_E)$ and $N_{\sigma_E}$ denote the lattice dual to $\sigma_E^\perp\cap M$ and the lattice generated by the vectors in~$\sigma_E\cap N$, respectively. We define the normal fan $\Sigma_E$ of $E$ in 
\[
N(\sigma_E)\otimes_\ZZ \RR \cong (N\otimes_\ZZ \RR)/ (N_{\sigma_E}\otimes_\ZZ \RR).
\] 
The associated toric variety of~$\Sigma_E$ is 
denoted as $X_E$.

Due to the one-to-one correspondence between the faces of $P$ and the cones in~$\Sigma$, we shall write $V(\sigma)\colonequals X_E$ where $\sigma=\sigma_E$ is defined as above. In particular, if $E$ is a facet of $P$, then $X_E$ is a divisor of $X_\Sigma$. In this case, we shall write $D(\rho)\colonequals X_E$ where $\rho= \sigma_E$ is the $1$-dimensional cone corresponding to $E$.
We call $V(\sigma)$ or $D(\rho)$ the \emph{orbit closures} corresponding to $\sigma$ or $\rho$, respectively. 

\begin{remark}
The usual definition of the orbit closure corresponding to a cone in~$\Sigma$ comes with the orbit decomposition of the affine toric variety associated with the given cone with respect to the algebraic torus action. The reader is referred to \cite[Chapter 3.2]{CLS} for more details. 
\end{remark}

The toric variety $X_\Sigma$ associated with a simple rational polytope has at worst orbifold singularities; see for instance \cite[Theorem 11.4.8]{CLS}. In the literature, such a variety is often called a \emph{complete simplicial toric variety} or a \emph{toric orbifold}. 
In particular, when $P$ is a rational polygon in $M\otimes_\ZZ \RR$ for a $2$-dimensional lattice~$M$ and $\Sigma$ is the corresponding complete $2$-dimensional fan in $N\otimes_\ZZ \RR$,
the associated toric variety $X_\Sigma$ is called a \emph{toric surface}. 

\subsection{Chow rings and intersection product matrices}\label{sec intersection product}
Let $X_\Sigma$ be an arbitrary toric variety associated with a fan $\Sigma$. The Chow group~$A^*(X_\Sigma)$ of~$X_\Sigma$ is generated by the rational equivalence classes $[V(\sigma)]$ of orbit closures $V(\sigma)$ corresponding to cones $\sigma$ in  $\Sigma$. 
To be more precise, if $\sigma$ is of dimension $k$, then $V(\sigma)$ is an $(n-k)$-dimensional subvariety of $X_\Sigma$, and thus $[V(\sigma)]$ is a class in $A^{k}(X_\Sigma)$. 

For the case of complete simplicial toric varieties, the rational Chow group 
\[
A^*(X_\Sigma)_{\mathbb{Q}}\colonequals  \bigoplus_{k=0}^n A^k(X_\Sigma)\otimes \mathbb{Q}
\]
has the intersection product
\[
A^k(X_\Sigma)_{\mathbb{Q}}\otimes A^\ell(X_\Sigma)_\QQ\to A^{k+\ell}(X_\Sigma)_{\mathbb{Q}},
\]
given by 
\begin{equation}\label{eq_intersection_prod}
[V(\sigma)]\cdot [V(\tau)] = 
\frac{\mult(\sigma)\cdot \mult(\tau)}{\mult(\gamma)} [V(\gamma)]
\end{equation}
for cones $\sigma,\tau$ and $\gamma$ such that $\sigma+ \tau= \gamma$, where the sum $\sigma+\tau$ denotes the cone generated by the $1$-dimensional cones in $\sigma$ and $\tau$. The \emph{multiplicity} $\mult(\sigma)$ of $\sigma$ is the index of the sublattice generated by the primitive vectors spanning the $1$-dimensional rays in~$\sigma$. The product is defined to be $0$ if $V(\sigma)$ and $V(\tau)$ do not intersect. 

The intersection product in \eqref{eq_intersection_prod} implies that as a graded $\mathbb{Q}$-algebra~$A^*(X_\Sigma)_{\mathbb{Q}}$ is generated by the classes $[D(\rho)]\in A^1(X_\Sigma)_\mathbb{Q}$, where $D(\rho)$'s are $\mathbb{Q}$-Cartier divisors 
for $\rho\in \Sigma^{(1)}$, the set of rays of $\Sigma$. 
Each $[D(\rho)]$ defines a rational homology class in~$H_{2n-2}(X_\Sigma;\QQ)$, whose Poincar\'e dual $x_\rho$ is contained in $H^2(X_\Sigma;\mathbb{Q})$. 
Additionally, the assignment $[D(\rho)]\mapsto x_\rho$ yields a ring isomorphism 
\begin{equation}\label{eq_chow_to_singular}
\psi\colon  A^\ast(X_\Sigma)_\QQ \to H^\ast(X_\Sigma;\QQ).
\end{equation}
 Moreover, it is known from \cite{Dan, Jur} that the rational cohomology ring of~$X_\Sigma$ is isomorphic to the following quotient polynomial ring
\begin{equation}\label{eq_rational_cohom_SR_presentation}
\QQ[x_\rho \mid \rho\in \Sigma^{(1)}]/\left(\mathcal{I}+\mathcal{J}\right)
\end{equation}
where 
$\mathcal{I}$ is the ideal generated by square-free monomials 
\[\left\{ \prod_{\alpha\in J}x_{\rho_\alpha} \Bigm|  
\sum_{\alpha\in J}\rho_\alpha \notin \Sigma \right\}\]
and $\mathcal{J}$ is the ideal generated by linear combinations 
\[\left\{ \sum_{\rho \in \Sigma^{(1)}} \left< m, \lambda_\rho \right> \Bigm| m \in M  \right\}\]
where $\lambda_\rho\in N$ denotes the primitive vector generating $\rho \in \Sigma^{(1)}.$

When $X_\Sigma$ is a toric surface, namely $\Sigma$ is a normal fan of a lattice polygon $P$, we assume that $P$ has $n+2$ facets say $F_1, \dots, F_{n+2}$ labeled in a counterclockwise direction. Identifying the lattice $N$ with $\mathbb{Z}^2$ by a suitable choice of a basis, we write  
\[
\lambda_i=(a_i, b_i)
\]
as the primitive outward normal vector of $F_i$, which spans the $1$-dimensional cone $\rho_i$ in $\Sigma$. Then, 
due to \eqref{eq_chow_to_singular} and \eqref{eq_rational_cohom_SR_presentation}  there is an isomorphism 
\begin{equation}\label{Eq_chow_SR_surface}
\varphi \colon A^\ast(X_\Sigma)_\mathbb{Q}\to \mathbb{Q}[x_1, \dots, x_{n+2}]/( \mathcal{I}+\mathcal{J})    
\end{equation}
defined by $\varphi([D(\rho_i)]) = x_i$,
and the ideals $\mathcal{I}$ and $\mathcal{J}$ are given by
\begin{align}
\begin{split}\label{eq_I_J_for_toric_surf}
\mathcal{I}&=\left\langle a_1x_1+\cdots + a_{n+2}x_{n+2},~ b_1x_1+\cdots + b_{n+2}x_{n+2}\right\rangle; \\
\mathcal{J}&=
\left\langle x_ix_j \mid  
i\neq j \text{ and } |i-j| \not\equiv 1 \text{ mod } (n+2)\right\rangle.
\end{split}
\end{align}
Then the intersection product \eqref{eq_intersection_prod} of the Chow ring $A^\ast(X_\Sigma)_\mathbb{Q}$ simplifies to 
\begin{equation}\label{eq_intersection_prod_2d}
[D(\rho_i)]\cdot [D(\rho_j)]=\begin{cases} 
\frac{1}{\mult(\sigma)}[V(\sigma)]& \text{if } |i-j| \equiv 1 ; \\[0.5em]
 \frac{c_i}{c_i'\cdot \mult(\sigma')}[V(\sigma')]
 =\frac{c_i}{c_i''\cdot \mult(\sigma'')}[V(\sigma'')] &\text{if } i=j;\\[0.5em]
0  &\text{otherwise.} 
\end{cases}
\end{equation}
Here, in the first case $\sigma=\rho_i+\rho_j$ and $\equiv$ means the congruence of modulo $n+2$, and in the second case $\sigma'=\rho_{i-1}+ \rho_i$ and
$\sigma''=\rho_i+\rho_{i+1}$. 

Writing $\lambda_i=(a_i, b_i)\in N$ as the primitive vector spanning $\rho_i$, it follows that 
\begin{equation}\label{eq multiplicity}
\mult(\sigma')=a_{i-1}b_{i}-a_{i}b_{i-1}
\text{ and }
\mult(\sigma'')=a_ib_{i+1}-a_{i+1}b_i. 
\end{equation}
Moreover, consecutive three vectors $\lambda_{i-1},\lambda_i,\lambda_{i+1}$ satisfy that 
\begin{equation}\label{eq_wall_rel}
c_i'\lambda_{i-1}+c_i\lambda_i+c_i''\lambda_{i+1}=0
\end{equation}
for some integers $c_i', c_i, c_i''\in \ZZ$. 
Since $\rho_i$ is located between $\rho_{i-1}$ and $\rho_{i+1}$ in the counterclockwise direction, we may assume that $c_i'\cdot c_i'' >0$. Equation~\eqref{eq_wall_rel} is called a \emph{wall relation}; see~\cite[(6.4.4)]{CLS}.

\begin{lemma}\label{lem_Chow_deg_4_generator}
Let $\Sigma$ be a complete $2$-dimensional fan and  $X_\Sigma$ the associated toric surface. 
For any two distinct 2-dimensional cones $\sigma, \tau$ in $\Sigma$, we have
\[\text{$[V(\sigma)]=[V(\tau)]$ in $A^2(X_\Sigma)_\mathbb{Q}$.}\] 
\end{lemma}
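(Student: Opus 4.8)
The plan is to prove the equality of all point classes by a connectivity argument combined with a single algebraic computation inside $A^*(X_\Sigma)_\QQ$ that plays the two linear relations among the divisor classes off against the product formula~\eqref{eq_intersection_prod_2d}. First I would record the geometric picture. A $2$-dimensional cone $\sigma$ in $\Sigma$ corresponds to a vertex of $P$, so $V(\sigma)$ is a point and $[V(\sigma)]\in A^2(X_\Sigma)_\QQ\cong\QQ$. Since $\Sigma$ is a complete $2$-dimensional fan, its maximal cones are exactly $\sigma_i=\rho_i+\rho_{i+1}$ (indices mod $n+2$), arranged cyclically so that two consecutive maximal cones share precisely one ray. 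As any two maximal cones are joined by a chain of such consecutive pairs, it suffices to prove that the two cones meeting along a fixed ray $\rho_i$, namely $\sigma'=\rho_{i-1}+\rho_i$ and $\sigma''=\rho_i+\rho_{i+1}$, satisfy $[V(\sigma')]=[V(\sigma'')]$.

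For the core step, I would multiply each of the two linear relations $\sum_j a_j[D(\rho_j)]=0$ and $\sum_j b_j[D(\rho_j)]=0$ coming from~\eqref{eq_I_J_for_toric_surf} by the divisor class $[D(\rho_i)]$ of the shared ray. Using $[D(\rho_i)]\cdot[D(\rho_j)]=0$ for non-adjacent indices, only the three terms with $j\in\{i-1,i,i+1\}$ survive, giving
\begin{align*}
a_{i-1}[D(\rho_{i-1})]\cdot[D(\rho_i)] + a_i[D(\rho_i)]^2 + a_{i+1}[D(\rho_i)]\cdot[D(\rho_{i+1})] &= 0,\\
b_{i-1}[D(\rho_{i-1})]\cdot[D(\rho_i)] + b_i[D(\rho_i)]^2 + b_{i+1}[D(\rho_i)]\cdot[D(\rho_{i+1})] &= 0.
\end{align*}
Rewriting the off-diagonal products via~\eqref{eq_intersection_prod_2d} as $[D(\rho_{i-1})]\cdot[D(\rho_i)]=\mult(\sigma')^{-1}[V(\sigma')]$ and $[D(\rho_i)]\cdot[D(\rho_{i+1})]=\mult(\sigma'')^{-1}[V(\sigma'')]$, I would then eliminate the self-intersection term $[D(\rho_i)]^2$ by forming the combination $b_i\cdot(\text{first})-a_i\cdot(\text{second})$. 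The coefficients of the two point classes that emerge are exactly the $2\times2$ determinants $a_{i-1}b_i-a_ib_{i-1}=\mult(\sigma')$ and $a_{i+1}b_i-a_ib_{i+1}=-\mult(\sigma'')$ from~\eqref{eq multiplicity}; after dividing by the respective multiplicities, the relation collapses to $[V(\sigma')]-[V(\sigma'')]=0$, as desired. Chaining these equalities around the cycle of maximal cones then yields $[V(\sigma)]=[V(\tau)]$ for every pair.

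The main obstacle is the appearance of the self-intersection $[D(\rho_i)]^2$, which cannot be evaluated directly without invoking the diagonal case of~\eqref{eq_intersection_prod_2d} and its auxiliary integers $c_i',c_i,c_i''$. The purpose of using \emph{two} linear relations rather than one is precisely to cancel this term, and the computation only closes because the determinants produced by the elimination coincide with the multiplicities $\mult(\sigma')$ and $\mult(\sigma'')$, so that the normalizing factors introduced by~\eqref{eq_intersection_prod_2d} cancel exactly. A secondary point worth verifying is that the argument is uniform in $i$ and never degenerates: the primitivity of $\lambda_i=(a_i,b_i)$ guarantees $(a_i,b_i)\neq(0,0)$, so the linear combination used in the elimination is nontrivial, and the same reasoning applies at every shared ray around the fan.
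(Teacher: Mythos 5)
Your proposal is correct and is essentially the paper's own proof: the combination $b_i\cdot(\text{first relation})-a_i\cdot(\text{second relation})$, after multiplying by $[D(\rho_i)]$, is (up to sign) exactly the element $a_ix_iB-b_ix_iA$ that the paper computes in the presentation $\QQ[x_1,\dots,x_{n+2}]/(\mathcal{I}+\mathcal{J})$, and both arguments then identify the resulting $2\times 2$ determinants with $\mult(\sigma')$ and $\mult(\sigma'')$ via \eqref{eq multiplicity} and chain adjacent maximal cones around the fan. The only cosmetic difference is that you work directly in $A^*(X_\Sigma)_\QQ$ using \eqref{eq_intersection_prod_2d}, whereas the paper transports the same computation through the isomorphism $\varphi$ of \eqref{Eq_chow_SR_surface}.
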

\begin{proof}
For $1\leq i\leq n+2$,
let $\sigma_i=\rho_i+\rho_{i+1}$ denote the $2$-dimensional cone spanned 
by~$\rho_i, \rho_{i+1}\in \Sigma^{(1)}$ with the convention $\rho_{n+3}=\rho_1$. 
It suffices to show that $[V(\sigma_{i-1})]=[V(\sigma_{i})]$ for all $i=2, \dots, n$.

Write $A\colonequals  a_1x_1+\cdots + a_{n+2}x_{n+2}$ and $B\colonequals b_1x_1+\cdots + b_{n+2}x_{n+2}$ as the two generators of $\mathcal{I}$ in \eqref{eq_I_J_for_toric_surf}.
Then we have
\[
[a_ix_iB-b_ix_iA]  
=[(a_{i}b_{i-1}-a_{i-1}b_i) x_{i-1}x_i -(a_{i+1}b_{i}-a_{i}b_{i+1})x_{i}x_{i+1}]
\]
in $\mathbb{Q}[x_1, \dots, x_{n+2}]/( \mathcal{I}+\mathcal{J})$. 
Since $a_ix_iB-b_ix_iA\in \mathcal I$, we have
\begin{equation}\label{eq identity}
\left[(a_{i}b_{i-1}-a_{i-1}b_i) x_{i-1}x_i\right] = \left[ 
  (a_{i+1}b_{i} -a_{i}b_{i+1})x_{i}x_{i+1} \right].
\end{equation}
As the isomorphism $\varphi$ given in  \eqref{Eq_chow_SR_surface} is multiplicative, we have
\[\begin{split}
\mult(\sigma_{i-1})x_{i-1}x_{i}&=\mult(\sigma_{i-1})\varphi\left([D(\rho_{i-1})]\cdot [D(\rho_{i})]\right) = \varphi\left([V(\sigma_{i-1})]\right);\\
\mult(\sigma_i)x_{i}x_{i+1}&=\mult(\sigma_i)\varphi\left([D(\rho_i)]\cdot [D(\rho_{i+1})]\right) = \varphi\left([V(\sigma_i)]\right).
\end{split}
\]
Equations~\ref{eq multiplicity} and \eqref{eq identity} together imply that $\varphi\left([V(\sigma_{i-1})]\right)=\varphi\left([V(\sigma_i)]\right)$. Therefore we have 
 $[V(\sigma_{i-1})]= [V(\sigma_{i})]$.
\end{proof}

By Lemma \ref{lem_Chow_deg_4_generator}, any $[V(\sigma)]$ for a $2$-dimensional cone $\sigma\in \Sigma$ can be chosen as a generator of $A^2(X_\Sigma)_\mathbb{Q}\cong \mathbb{Q}$, which we denote by $[V]$.  
Hence, the set 
\begin{equation}\label{eq_PD_def}
\left\{[D(\rho_1)], \dots, [D(\rho_n)]; [V]\right\}
\end{equation}
forms a basis of $A^{>0}(X_\Sigma)_\mathbb{Q}$. 
The intersection product 
\[
A^1(X_\Sigma)_\QQ \otimes A^1(X_\Sigma)_\QQ \to A^2(X_\Sigma)_\QQ\cong \QQ,
\]
with respect to the basis \eqref{eq_PD_def} is represented by the symmetric matrix 
\begin{equation}\label{eq_M_int_mij}
M_{int}(X_\Sigma)=(m_{ij})_{1\leq i,j\leq n}
\end{equation}  
where $m_{ij}=m_{ji}\in \mathbb{Q}$ satisfies $[D(\rho_i)]\cdot [D(\rho_j)]=m_{ij}[V]$. The explicit formula for $m_{ij}$ is given in \eqref{eq_intersection_prod_2d}. We call the matrix $M_{int}(X_\Sigma)$ of \eqref{eq_M_int_mij} the \emph{intersection product matrix} of $X_\Sigma$.

\begin{example}\label{ex_surface}
Let $\Sigma$ be the fan described in Figure \ref{fig_2dim_fan_ex}. The primitive vectors generating the $1$-dimensional cones of $\Sigma$ are 
\[
\lambda_1=(-2,1),\, \lambda_2=(-2,-1),\, \lambda_3=(1,-2),\, \lambda_4=(1,0) \text{ and }
\lambda_5=(0,1).\]
Additionally, we use the following wall relations:
\begin{align*}
-2\lambda_5 +\lambda_1 -\lambda_2 &=-2 (0,1) +(-2, 1)-(-2,-1)=(0,0),\\
-5\lambda_1 +3\lambda_2 -4\lambda_3 &=5(2,-1) -3(2,1)- 4(1,-2)=(0,0),\\
-2\lambda_2 + \lambda_3 -5 \lambda_4 &=2(2,1)+(1,-2) -5(1,0)=(0,0).
\end{align*}
Using the above relations and \eqref{eq_intersection_prod_2d}, we compute  the intersection products:
\begin{itemize}
\item $[D(\rho_1)]\cdot [D(\rho_1)]=-\frac{1}{4}[V]$,
\item $[D(\rho_2)]\cdot [D(\rho_2)]=-\frac{3}{20}[V]$,
\item $[D(\rho_3)]\cdot [D(\rho_3)]=-\frac{1}{10}[V]$,
\item $[D(\rho_1)]\cdot [D(\rho_2)]=[D(\rho_2)]\cdot [D(\rho_1)]=\frac{1}{4}[V]$,
\item $[D(\rho_1)]\cdot [D(\rho_3)]=[D(\rho_3)]\cdot [D(\rho_1)]=0$,
\item $[D(\rho_2)]\cdot [D(\rho_3)]=[D(\rho_3)]\cdot [D(\rho_2)]=\frac{1}{5}[V]$.
\end{itemize}
Hence the intersection product matrix $M_{int}(X_\Sigma)$ is given by 
\[
M_{int}(X_\Sigma)=
\begin{pmatrix}
-\frac{1}{4}&\frac{1}{4}&0\\
\frac{1}{4}&-\frac{3}{20}&\frac{1}{5}\\
0&\frac{1}{5}&-\frac{1}{10}
\end{pmatrix}.
\]
\end{example}
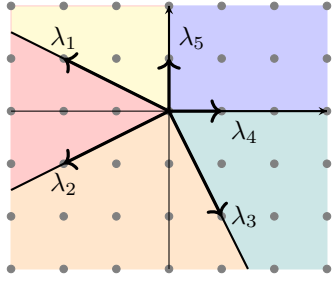
\begin{figure}
\begin{tikzpicture}[scale=0.7]

\draw[blue!20, fill=blue!20] (0,0)--(3,0)--(3,2)--(0,2)--cycle;
\draw[red!20, fill=yellow!20] (0,0)--(0,2)--(-3,2)--(-3,1.5)--cycle;
\draw[red!20, fill=red!20] (0,0)--(-3,1.5)--(-3,-1.5)--cycle;
\draw[orange!20, fill=orange!20] (0,0)--(-3,-1.5)--(-3,-3)--(1.5,-3)--cycle;
\draw[teal!20, fill=teal!20] (0,0)--(1.5,-3)--(3,-3)--(3,0)--cycle;

\foreach \x in {-3, -2,-1,0,1,2,3}{
	      \foreach \y in {-3, -2,-1,0,1,2}{
	        \node[draw,circle,inner sep=1pt,fill=gray, gray] at (\x,\y) {};
      }
    }

\draw[-stealth] (-3,0)--(3,0);
\draw[-stealth] (0,-3)--(0,2); 
\draw[very thick, ->] (0,0)--(1,0); 
\draw[thick] (0,0)--(3,0);
\node[below right] at (1,0) {$\lambda_4$};

\draw[very thick, ->] (0,0)--(0,1); 
\draw[thick] (0,0)--(0,2); 
\node[above right] at (0,1) {$\lambda_5$};

\draw[very thick, ->] (0,0)--(-2,1); 
\draw[thick] (0,0)--(-3,1.5); 
\node[above] at (-2,1) {$\lambda_1$};

\draw[very thick, ->] (0,0)--(-2,-1); 
\draw[thick] (0,0)--(-3,-1.5); 
\node[below] at (-2,-1) {$\lambda_2$};

\draw[very thick, ->] (0,0)--(1,-2); 
\draw[thick] (0,0)--(1.5,-3); 
\node[right] at (1,-2) {$\lambda_3$};

\end{tikzpicture}
\caption{An example of a complete $2$-dimensional fan $\Sigma$.}
\label{fig_2dim_fan_ex}
\end{figure}

\section{Topological construction and Cellular cohomology bases}\label{sec_cellular cohomology}
\subsection{Topological construction of a toric variety}
Given a lattice polytope $P\subset M\otimes_\ZZ \RR$, let $\mathcal{F}(P)$ be the set of facets of $P$. For each $F\in\mathcal{F}(P)$, we denote by~$\lambda_F\in N$ the primitive outward normal vector of the hyperplane supporting~$F$. One can associate a face $E$ of $P$ with the subtorus $T_E$ generated by the vectors $\lambda_F$ for  $F\in \mathcal{F}(P)$ such that $E\subset F$.

In this section we write $X_P$ as the toric variety associated with $P$. Following Jukiewicz's work \cite{Jur} on the moment map $X_P \to \mathfrak{t}^\ast$, where $\mathfrak{t}^\ast$ is the dual Lie algebra of $T^n$, identified with $M\otimes_\ZZ \RR$, there is a homeomorphism
\begin{equation}\label{eq_top_model}
X_P \cong P\times T^n /_\sim
\end{equation}
where $(p,t)\sim (q,s)$ if and only if $p=q$ and $t^{-1}s\in T_E$ for the face $E$ containing $p=q$ in its relative interior. We also refer to \cite{Fr}. In this case, the action of the torus $T^n$ can be understood as the natural multiplication of the second factor. Hence, the orbit map 
\[
\pi \colon  P\times T^n /_\sim \to P
\]
is the projection onto the first factor. Then, the orbit closure $V(\sigma)$ for the cone $\sigma$ in the normal fan $\Sigma$ of $P$ corresponding to $E$ is homeomorphic to $\pi^{-1}(E)$.

Now we provide a more detailed description for the case of toric surfaces. Let~$P$ be a rational polygon with $n+2$ facets $F_1, \dots, F_{n+2}$, labeled in a counterclockwise direction.
Recall from Section \ref{sec intersection product} that we write $\lambda_i = (a_i, b_i)$ for the primitive outward normal vector of $F_i$ for $i=1, \dots, n+2$, which spans the the corresponding $1$-dimensional cone $\rho_i\in \Sigma^{(1)}$. 
Note that each $2$-dimensional cone is generated by~$\{\lambda_i,\lambda_{i+1}\}$ for some $i\in\{1,\ldots,n+2\}$, which we denote by $\sigma_i$. Here $\lambda_{n+3}$ means~$\lambda_1$.

From the topological model \eqref{eq_top_model} of a toric variety $X_P$, one can see that $D(\rho_i)$ is homeomorphic to $\pi^{-1}(F_i)$ which is a $2$-sphere. We will denote this by $S^2_i$ following the notation from \cite{FSS}.
The orbit closure $V(\sigma_i)$ corresponding to a $2$-dimensional cone $\sigma_i$ is the fixed point of $X_P$ corresponding to the vertex $F_i \cap F_{i+1}$. See Figure~\ref{fig_P_Sigma_orbit_closure} for the description of a rational polygon, its normal fan and the corresponding orbit closures.

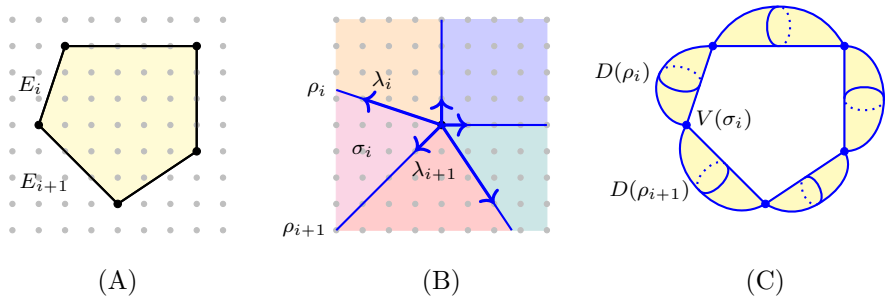
\begin{figure}
\begin{tikzpicture}[scale=0.35, rotate=180]

\begin{scope}[xshift=350]

\draw[thick, fill=yellow!20] (-3,-3)--(2,-3)--(3,0)--(0,3)--(-3,1)--cycle;

\foreach \x in {-4,-3,-2,-1,0,1,2,3,4}{
\foreach \y in {-4,-3,-2,-1,0,1,2,3,4}{
\node[draw,circle,inner sep=0.7pt,fill=gray!50, gray!50] at (\x,\y) {};
}
}
\draw[thick] (-3,-3)--(2,-3)--(3,0)--(0,3)--(-3,1)--cycle;

\node[left] at (2.5, -1.5) {\footnotesize$E_i$};
\node[below left] at (1.5, 1.5) {\footnotesize$E_{i+1}$};

\node[draw,circle,inner sep=1pt, fill=black] at (-3,-3) {};
\node[draw,circle,inner sep=1pt, fill=black] at (2,-3) {};
\node[draw,circle,inner sep=1pt, fill=black] at (3,0) {};
\node[draw,circle,inner sep=1pt, fill=black] at (0,3) {};
\node[draw,circle,inner sep=1pt, fill=black] at (-3,1) {};
\node at (0,6) {(A)};
\end{scope}

\begin{scope}

\draw[red!20, fill=red!20] (0,0)--(4,4)--(-8/3,4)--cycle;
\draw[teal!20, fill=teal!20] (0,0)--(-8/3,4)--(-4,4)--(-4,0)--cycle;
\draw[blue!20, fill=blue!20] (0,0)--(-4,0)--(-4,-4)--(0,-4)--cycle;
\draw[orange!20, fill=orange!20] (0,0)--(0,-4)--(4,-4)--(4,-4/3)--cycle;
\draw[magenta!20, fill=magenta!20] (0,0)--(4,-4/3)--(4,4)--cycle;
\node at (0,6) {(B)};

\foreach \x in {-4,-3,-2,-1,0,1,2,3,4}{
\foreach \y in {-4,-3,-2,-1,0,1,2,3,4}{
\node[draw,circle,inner sep=0.7pt,fill=gray!50, gray!50] at (\x,\y) {};
}
}


\node[draw,circle,inner sep=1pt, fill=black] at (0,0) {};

\draw[very thick, blue, ->] (0,0)--(1,1);
\node[below, right] at (1.5,1.75) {\footnotesize$\lambda_{i+1}$};
\draw[thick, blue] (0,0)--(4,4);
\node[left] at (4,4) {\footnotesize$\rho_{i+1}$};

\draw[very thick, blue, ->] (0,0)--(-2,3);
\draw[thick, blue] (0,0)--(-8/3, 4);

\draw[very thick, blue, ->] (0,0)--(-1,0);
\draw[thick, blue] (0,0)--(-4, 0);

\draw[very thick, blue, ->] (0,0)--(0,-1);
\draw[thick, blue] (0,0)--(0,-4);

\draw[very thick, blue, ->] (0,0)--(3,-1);
\node[above right] at (3,-1) {\footnotesize$\lambda_i$};
\draw[thick, blue] (0,0)--(4, -4/3);
\node[left] at (4,-4/3) {\footnotesize$\rho_i$};

\node at (3,1) {\footnotesize$\sigma_i$};
\end{scope}

\begin{scope}[xshift=-350]
\draw (-3,-3)--(2,-3)--(3,0)--(0,3)--(-3,1)--cycle;

\draw[thick, blue, fill=yellow!30] (-3,-3)--(2,-3) [out=240, in=0] to (-1/2,-4.5) [out=180,in=300] to (-3,-3);
\draw[thick, blue] (-1/2, -3) [out=0,in=0] to (-1/2, -4.5);
\draw[thick, blue, dotted] (-1/2, -3) [out=180,in=180] to (-1/2, -4.5);

\draw[thick, blue, fill=yellow!30] (2,-3)--(3,0) [out=0, in=60] to (4,-2) [out=240,in=0] to (2,-3);
\draw[thick, blue] (5/2, -3/2) [out=60,in=60] to (4,-2);
\draw[thick, blue, dotted] (4,-2) [out=240,in=240] to (5/2, -3/2);
\node[left] at (4,-2) {\footnotesize$D(\rho_i)$};

\draw[thick, blue, fill=yellow!30] (3,0)--(0,3) [out=30, in=135] to (2.5,2.5) [out=315, in=60] to (3,0);
\draw[thick, blue] (1.5,1.5) [out=135, in=135] to (2.5,2.5); 
\draw[thick, blue, dotted] (2.5,2.5) [out=315, in=315] to (1.5,1.5); 
\node[left] at (2.5,2.5) {\footnotesize$D(\rho_{i+1})$};

\draw[thick, blue, fill=yellow!30] (0,3)--(-3,1) [out=120, in=210] to (-2,3) [out=30, in=150] to (0,3);
\draw[thick, blue] (-2,3) [out=210, in=210] to (-1.5,2); 
\draw[thick, blue, dotted] (-2,3) [out=30, in=30] to (-1.5,2); 

\draw[thick, blue, fill=yellow!30] (-3,-3)--(-3,1) [out=180, in=90] to (-4.5,-1) [out=270,in=180] to (-3,-3);
\draw[thick, blue] (-4.5,-1) [out=270, in=270] to (-3,-1); 
\draw[thick, blue, dotted] (-4.5,-1) [out=90, in=90] to (-3,-1); 

\node[draw,circle,inner sep=1pt, fill=blue, blue] at (-3,-3) {};
\node[draw,circle,inner sep=1pt, fill=blue, blue] at (2,-3) {};
\node[draw,circle,inner sep=1pt, fill=blue, blue] at (3,0) {};
\node[right] at (3,-0.2) {\footnotesize$V(\sigma_i)$};
\node[draw,circle,inner sep=1pt, fill=blue, blue] at (0,3) {};
\node[draw,circle,inner sep=1pt, fill=blue, blue] at (-3,1) {};

\node at (0,6) {(C)};
\end{scope}
\end{tikzpicture}
\caption{(A) A rational polygon $P$ in $M\otimes_\ZZ \RR$; (B) The normal fan $\Sigma$ of $P$ in $N\otimes_\ZZ\RR$; (C) orbit closures.}
\label{fig_P_Sigma_orbit_closure}
\end{figure}

Note that $X_P$ can be written as the union
\[
X_P\cong\pi^{-1}(F_1\cup\cdots\cup F_n)\cup\pi^{-1}(P\setminus(F_1\cup\cdots\cup F_n)).
\]
On the right hand side, the first preimage is homotopy equivalent to the wedge of 2-spheres $S^2_1 \vee \cdots \vee S^2_n$. 
The second one is homeomorphic to $(\RR^2_\geq \times T^2)/_\sim$ as~$P\setminus (F_1\cup \cdots\cup F_n)$ is homeomorphic to the nonnegative quadrant~$\RR^2_\geq$ as manifolds with corners. For $\lambda_{n+1}=(a_{n+1}, b_{n+1})$ and $\lambda_{n+2}=(a_{n+2}, b_{n+2})$ corresponding to facets $F_{n+1}$ and~$F_{n+2}$, 
let $\phi \colon T^2\twoheadrightarrow T^2$ be the endomorphism defined by 
\[
\phi(t_1, t_2)=(t_1^{a_{n+1}}t_2^{a_{n+2}}, t_1^{b_{n+1}}t_2^{b_{n+2}}).
\]
Then the quotient space $(\RR^2_\geq \times T^2)/_\sim$ is homeomorphic to $\CC^2/\ker \phi$, where $\ker\phi$ acts on $\CC^2$ as a subgroup of $T^2$. Therefore, one can also view $X_P$ as the homotopy cofiber in the homotopy cofibration sequence
\begin{equation}\label{eq_cofib}
S^3/\ker \phi \to \bigvee_{i=1}^n S^2_i \to X_P;
\end{equation}
see for instance \cite{BSS, FSS} for more details.

\subsection{Cellular bases and cellular cup product matrices}
The homotopy cofibration sequence  \eqref{eq_cofib} implies that, rationally,
$X_P$ has a cellular structure consisting of a collection of 2-spheres $S^2_i$ and a single 4-cell.
Therefore, the rational homology group of $X_P$ has a basis 
\begin{equation}\label{eq_cellular_basis}
\{e_i \mid i=1, \dots, n\}\subset H_2(X_P;\QQ) \quad \text{and} \quad f \in H_4(X_P;\QQ),
\end{equation}
where $e_i$ is the homology class corresponding to the 2-sphere $S^2_i$ and $f$ corresponds to the 4-cell of $X_P$. 

\begin{definition}\cite[Definition 3.2]{FSS}\label{def_M_cup}
Let $u_i\in H^2(X_P;\QQ)$ be the Kronecker dual of $e_i\in H_2(X_P;\QQ)$ and $v\in  H^4(X_P;\QQ)$ the Kronecker dual of $f\in H_4(X_P;\QQ)$, respectively. 
\begin{enumerate}
    \item The set $\{u_1,\ldots,u_n;v\}$ is called the \emph{cellular basis} of~$\widetilde{H}^*(X_P;\QQ)$.
    \item The symmetric matrix~$M_{cup}(X_P)=\left(c_{ij}\right)_{1\leq i,j \leq n}$ such that
\[u_i \cup u_j = c_{ij}v~ \text{ for }~ 1\leq i\leq j\leq n\]
is called the \emph{cellular cup product matrix}. 
\end{enumerate}

\end{definition}

In general, choosing different orientations of $S^2_i$'s and $X_P$ will affect the signs of homology classes $e_i\in H_2(X_P;\QQ)$ and $f\in H_4(X_P;\QQ)$, resulting in a different cellular cup product matrix $M_{cup}(X_P)$.
In~\cite{FSS2}, specific orientations of $S^2_i$'s and $X_P$ are defined and the authors showed that the entries of $M_{cup}(X_P)$ are given by \eqref{eq_Mcup_entry} below. 
For the generator of $H_4(X_P;\QQ)$, we take $f$ as the fundamental class of $X_P$. Regarding the choices of orientations of $S^2_i$'s, see~Appendix~\ref{Appendix_alg_cel_basis}.

From now on, the cellular basis $\{u_1,\ldots,u_n;v\}$ always means the basis with the specific choice of orientations given in Appendix~\ref{Appendix_alg_cel_basis}.\footnote{Such cellular basis is called the \emph{algebraic cellular basis} in~\cite{FSS2}.}

\begin{theorem}{\cite[Theorem 7.5]{FSS2}}\label{thm_M_cup}
Let $X_P$ be a compact toric surface and write $\lambda_i=(a_i, b_i)\in \ZZ^2$ for $1\leq i\leq n+2$ the primitive vectors generating $1$-dimensional cones of the normal fan $\Sigma$ of $P$. By a suitable change of the basis of $\ZZ^2$, we assume that $\lambda_{n+1}=(1,0)$. 
Then
the cellular cup product matrix $M_{cup}(X_P)=(c_{ij})_{1\leq i,j \leq n}$ with respect to the cellular basis $\{u_1,\ldots,u_n;v\}$ is given by
\begin{equation}\label{eq_Mcup_entry}
c_{ij}=\begin{cases}
\displaystyle\frac{b_j(a_ib_{n+2}-a_{n+2}b_i)}{b_{n+2}}  &\text{if }i\leq j\\
\displaystyle\frac{b_i(a_jb_{n+2}-a_{n+2}b_j)}{b_{n+2}}  &\text{if }i> j.
\end{cases}\end{equation}
\end{theorem}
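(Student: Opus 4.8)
The plan is to identify the cellular basis with intersection-theoretic data and thereby reduce the computation of $M_{cup}(X_P)$ to inverting the matrix $M_{int}(X_P)$. Since $u_i$ is the Kronecker dual of $e_i=[S^2_i]$ and $S^2_i$ is the divisor $D(\rho_i)$, I would first pass through Poincar\'e duality: writing $x_j$ for the Poincar\'e dual of $[D(\rho_j)]$ as in \eqref{eq_chow_to_singular}, the adjunction between cup and cap products gives $\langle w,e_j\rangle=\langle w\cup x_j,[X_P]\rangle$ for every $w\in H^2(X_P;\QQ)$. Combined with the cohomological form $x_k\cup x_l=m_{kl}v$ of the intersection product \eqref{eq_intersection_prod_2d}---where $v$ is normalized by $\langle v,[X_P]\rangle=1$ and the $m_{kl}$ are the entries of $M_{int}(X_P)$---this characterizes $u_i$ as the unique class with $u_i\cup x_j=\delta_{ij}v$ for $1\le j\le n$. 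Expanding $u_i$ in the basis $\{x_1,\dots,x_n\}$ then forces the coefficient matrix to be $M_{int}(X_P)^{-1}$, so $M_{cup}(X_P)=M_{int}(X_P)^{-1}$, and it remains to check that the closed form \eqref{eq_Mcup_entry} is this inverse.

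For the latter I would put $p_i:=(a_ib_{n+2}-a_{n+2}b_i)/b_{n+2}$ and $q_i:=b_i$, so that the claimed entries read $c_{ij}=p_iq_j$ for $i\le j$ and $c_{ij}=p_jq_i$ for $i>j$, and verify directly that $M_{int}(X_P)\cdot(c_{ij})=I$. The point is that $M_{int}(X_P)$ is tridiagonal on $\{1,\dots,n\}$, since $m_{kl}=0$ for $|k-l|\ge 2$ by \eqref{eq_intersection_prod_2d}; no wrap-around entry occurs because $\rho_1$ and $\rho_n$ are separated by $\rho_{n+1},\rho_{n+2}$. The wall relation \eqref{eq_wall_rel}, together with the identity $c_i'\mult(\sigma')=c_i''\mult(\sigma'')$ forced by Lemma~\ref{lem_Chow_deg_4_generator} and \eqref{eq_intersection_prod_2d}, shows that each of $(a_i)$ and $(b_i)$---hence $(p_i)$---satisfies the second-order recurrence $m_{i,i-1}y_{i-1}+m_{ii}y_i+m_{i,i+1}y_{i+1}=0$ read off from the interior rows of $M_{int}(X_P)$. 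Using this recurrence the off-diagonal entries of the product vanish, and the boundary rows $i=1$ and $i=n$ close up exactly because $p_{n+2}=0$ by construction and $q_{n+1}=b_{n+1}=0$ by the normalization $\lambda_{n+1}=(1,0)$; the diagonal entries equal $1$ through the Wronskian identity $p_iq_{i+1}-p_{i+1}q_i=a_ib_{i+1}-a_{i+1}b_i=\mult(\sigma_i)=1/m_{i,i+1}$, the $a_{n+2}$-terms cancelling.

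The main obstacle is not this computation but the first step: arranging $e_i=[D(\rho_i)]$ and $f=[X_P]$ with matching orientations, so that $\langle x_k,e_l\rangle$ equals $m_{kl}$ on the nose rather than up to an overall sign. This is precisely where the orientation conventions of the cellular basis fixed in Appendix~\ref{Appendix_alg_cel_basis} enter; a wrong orientation would rescale a row or column of $M_{cup}(X_P)$ by $\pm 1$ and break \eqref{eq_Mcup_entry}. A more intrinsic alternative bypasses Poincar\'e duality and reads $M_{cup}(X_P)$ off the attaching map of the top cell in the cofibration \eqref{eq_cofib}, the entries $c_{ij}$ being the Hopf and Whitehead invariants of $S^3/\ker\phi\to\bigvee_{i=1}^n S^2_i$; that route settles orientations internally but moves the difficulty to an explicit description of this attaching map in terms of the $\lambda_i$.
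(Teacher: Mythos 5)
Your reduction has two parts, and they are of very different standing. The second part --- checking that the matrix \eqref{eq_Mcup_entry} is the inverse of $M_{int}(X_P)$ --- is sound and completable exactly as you sketch it: the tridiagonality of $M_{int}(X_P)$ on $\{1,\dots,n\}$, the recurrence $m_{i,i-1}y_{i-1}+m_{ii}y_i+m_{i,i+1}y_{i+1}=0$ for $y=a,b$ (which is the wall relation \eqref{eq_wall_rel} after multiplying by $c_i'\mult(\sigma')$ and using $c_i'\mult(\sigma')=c_i''\mult(\sigma'')$), the boundary values $p_{n+1}=1$, $p_{n+2}=0$, $q_{n+1}=b_{n+1}=0$, and the Wronskian identity $p_iq_{i+1}-p_{i+1}q_i=\mult(\sigma_i)$ all check out. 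But the first part --- the identification $e_i=[D(\rho_i)]$ and $f=[X_P]$ \emph{with matching orientations}, which is what converts Kronecker duality into the characterization $u_i\cup x_j=\delta_{ij}v$ and hence gives $M_{cup}(X_P)=M_{int}(X_P)^{-1}$ --- is not a preliminary normalization that can be set aside: it is precisely where the content of the theorem lies. You name it as ``the main obstacle'' and stop there, so the proposal is not a proof; without it you only obtain $M_{cup}(X_P)=E\,M_{int}(X_P)^{-1}E$ for some unknown diagonal sign matrix $E$, which does not pin down \eqref{eq_Mcup_entry}.

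The reason this step cannot be waved through is that the classes $u_i$ are \emph{not} defined by choosing geometric orientations of the spheres $S^2_i$; they are normalized algebraically in Appendix~\ref{Appendix_alg_cel_basis}, via $\Psi(u^{\triangle}_{(a_i,b_i)})=[a_ib_iy_1]$, the pullbacks along $\eta_i$ and the rescaling map $\tilde{\kappa}$, and the separate ad hoc treatment of the degenerate directions $\lambda_i=(-1,0),(0,-1)$. Showing that this algebraic normalization of $e_i$ agrees with the complex orientation of the divisor $D(\rho_i)$ --- i.e., Equation~\eqref{eqn_divisor = e_i} --- is exactly what the commutative-diagram chases of Appendix~\ref{appendix_B} and of \cite{FSS2} accomplish. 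Moreover, you cannot close your gap by citing that material here: Appendix~\ref{appendix_B} proves \eqref{eqn_divisor = e_i} \emph{using} the formula \eqref{eq_Mcup_entry} (through \cite[Lemma~4.8]{FSS2} and \cite[Theorem~7.5]{FSS2}), so invoking it, or Lemma~\ref{lemma_PD_of_x_i}, or Theorem~\ref{thm_main}, to prove \eqref{eq_Mcup_entry} would be circular. The route the paper actually relies on (the proof in \cite{FSS2}) avoids orienting $S^2_i$ geometrically altogether: it computes $\Phi(u_i)=\left[\sum_{j}c_{ij}x_j\right]$ directly by pulling back the algebraically normalized generators through $\eta_i$, $\sigma$, $\tilde{\kappa}$ and reading off coefficients in the presentation \eqref{eq_rational_cohom_SR_presentation}, with no reference to $M_{int}(X_P)^{-1}$. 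Your fallback suggestion --- extracting $c_{ij}$ as Hopf and Whitehead invariants of the attaching map in \eqref{eq_cofib} --- faces the same unresolved issue of making signs explicit in terms of the $\lambda_i$, so it does not repair the argument either.
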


We note that $M_{cup}(X_P)$ is symmetric. Moreover, 
if $X_P$ has a smooth fixed point, or equivalently $\lambda_{n+1}=(1,0)$ and $\lambda_{n+2}=(0,1)$ after a suitable basis change of~$\mathbb{Z}^2$, then Equation \eqref{eq_Mcup_entry} has a simpler form 
\begin{equation}\label{eq_Mcup_entry_sm_vert}
c_{ij}=a_ib_j,
\end{equation}
implying that $u_i \cup u_j = a_ib_j v$ for $1\leq i\leq j\leq n$. 

\begin{example}\label{ex_M_cup}
For the fan $\Sigma$ considered in Example \ref{ex_surface}, we apply \eqref{eq_Mcup_entry_sm_vert} to compute the cellular cup product matrix $M_{cup}(X_P)$ and obtain 
\[
M_{cup}(X_P)=\begin{pmatrix}
-2&2&4\\
2&2&4\\
4&4&-2
\end{pmatrix}.
\]
\end{example}

\section{Relation between cellular product matrices and intersection product matrices}\label{sec_cellular and poincare}
In Examples \ref{ex_surface} and \ref{ex_M_cup}, 
we have computed the intersection product matrix
and the cellular cup product matrix
for the toric surface associated with the fan in Figure~\ref{fig_2dim_fan_ex}. It is easy to check that these two matrices 
satisfy the claim of Theorem~\ref{thm_main_Intro}, that is,  
\[
M_{cup}(X_P)\cdot M_{int}(X_P)=I.
\]
Below, we demonstrate that the above equation holds for any toric surface. In what follows, we shall keep using $X_P$ for the toric surface associated with a rational polygon $P$. Hence, whenever we need to refer to the discussion of Section \ref{sec_toric_surf}, we consider the toric variety $X_\Sigma$ corresponding to the normal fan $\Sigma$ of $P$.

In the above equation, the matrix $M_{cup}(X_P)$ records the products of cellular basis elements in $H^*(X_P;\QQ)$, while $M_{int}(X_P)$ records the products of elements in the basis~\eqref{eq_PD_def} for $A^*(X_P)_{\QQ}$.
To compare them, we first identify the Chow ring with the cohomology ring using the isomorphism $A^i(X_P)_{\QQ}\to H_{4-2i}(X_P;\QQ)$ and the Poincar\'{e} duality
\begin{equation}\label{eq_PD}
\PD\colon H^*(X_P;\QQ) \to H_{4-*}(X_P;\QQ)
\end{equation}
defined as follows.

Let $\triangle_*\colon H_i(X_P;\QQ)\to\bigoplus_{p+q=i}H_p(X_P;\QQ)\otimes H_q(X_P;\QQ)$ be the comultiplication induced by the diagonal map $\triangle\colon X_P\to X_P\times X_P$, and let
\[
\langle ~ , ~\rangle\colon H^i(X_P,\QQ)\otimes H_j(X_P;\QQ)\to\QQ
\]
be the evaluation map if $i=j$ and be the zero map if $i\neq j$.
Then the Poincar\'{e} duality $\PD$ is defined to be the composite
\begin{align*}
H^i(X_P;\QQ)
&\xrightarrow{id \otimes f}H^i(X_P;\QQ)\otimes H_4(X_P;\QQ)\\
 &\xrightarrow{id\otimes \Delta_*} 
 H^i(X_P;\QQ)\otimes \left( \bigoplus_{p+q=4} H_p(X_P;\QQ)\otimes H_q(X_P;\QQ)\right) \\
 &\xrightarrow{\langle  \,, \,\rangle\otimes id} \QQ\otimes H_{4-i}(X_P;\QQ)\cong H_{4-i}(X_P;\QQ) 
\end{align*}
where $f\in H_4(X_P;\QQ)$ in the first arrow is the fundamental class of $X_P$.
It is known that $\PD$ is an isomorphism when $X_P$ is a toric surface; see~\cite{Sat}.

\begin{definition}\label{def_Poincare dual basis}
Let $X_P$ be the toric surface associated with the normal fan~$\Sigma$ of a rational polygon~$P$, and let
\[
\{[D_{\rho_1}],\ldots,[D_{\rho_n}];[V]\}
\]
be the basis~\eqref{eq_PD_def}
for $A^{>0}(X_P)_{\QQ}$.
Under the isomorphism $A^*(X_P)_{\QQ}\cong H_{4-*}(X_P;\QQ)$, identify $[D_{\rho_1}],\ldots,[D_{\rho_n}]$ and $[V]$ with homology classes of their corresponding subvarieties in $X_P$. Then we call the collection
\[
\left\{\PD^{-1}([D_{\rho_1}]),\ldots,\PD^{-1}([D_{\rho_n}]);\PD^{-1}([V])\right\}
\]
the \emph{Poincar\'{e} dual basis} for $\widetilde{H}^*(X_P;\QQ)$.
\end{definition}

The multiplications of Poincar\'{e} dual basis elements are given by $M_{int}(X_P)$. To be more precise, we have
\begin{multline}
    \label{eqn intersection matrix for Poincare basis} 
\PD^{-1}([D_{\rho_i}])\cup\PD^{-1}([D_{\rho_j}])
=\psi([D_{\rho_i}])\cup \psi([D_{\rho_j}])\\
=\psi([D_{\rho_i}]\cdot [D_{\rho_j}])
=m_{ij}\psi([V])=m_{ij}\PD^{-1}([V])
\end{multline}
where $m_{ij}$ is the $(i,j)$-entry of $M_{int}(X_P)$ given in~\eqref{eq_M_int_mij}, and $\psi$ is the ring isomorphism from~\eqref{eq_chow_to_singular}.

Now there are two bases for $\widetilde{H}^*(X_P;\QQ)$, namely the cellular basis given in Appendix~\ref{Appendix_alg_cel_basis}, and the Poincar\'{e} dual basis given in Definition~\ref{def_Poincare dual basis}. We compare these two bases.

\begin{lemma}\label{lemma_PD_of_x_i}
Let $\{u_1,\ldots,u_n;v\}$ and $\left\{\PD^{-1}([D_{\rho_1}]),\ldots,\PD^{-1}([D_{\rho_n}]);\PD^{-1}([V])\right\}$ be the bases for $\widetilde{H}^*(X_P;\QQ)$ given in Appendix~\ref{Appendix_alg_cel_basis} and Definition~\ref{def_Poincare dual basis} respectively, and let $c_{ij}$ be the $(i,j)$-entry of the cellular cup product matrix $M_{cup}(X_P)$. Then
\begin{equation}\label{eqn_divisor = e_i deg 2}
u_i=\sum^n_{j=1}c_{ij}\PD^{-1}([D_{\rho_j}])\in H^{2}(X_P;\QQ)
\qquad
\text{for }1\leq i\leq n,
\end{equation}
and
\begin{equation}\label{eqn_divisor = e_i deg 4}
v=\PD^{-1}([V])\in H^4(X_P;\QQ).
\end{equation}
\end{lemma}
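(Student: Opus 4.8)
The plan is to compute the Poincaré duals $\PD(u_i)$ and $\PD(v)$ directly and read off the coefficients, rather than comparing the two bases through their product matrices. The latter strategy would express the change of basis via $M_{cup}(X_P)M_{int}(X_P)$ and force the identity $M_{int}(X_P)M_{cup}(X_P)=I$ upon us, which is exactly Theorem~\ref{thm_main_Intro} that this lemma is meant to feed into, so it must be avoided. The two structural ingredients I would rely on are: the cap–cup adjunction $\langle\alpha,\beta\cap z\rangle=\langle\beta\cup\alpha,z\rangle$, valid because the composite defining $\PD$ in \eqref{eq_PD} is precisely the cap product $\alpha\mapsto\alpha\cap f$ with the fundamental class; and the Kronecker dualities $\langle v,f\rangle=1$ and $\langle u_k,e_j\rangle=\delta_{kj}$ from Definition~\ref{def_M_cup}. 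I would also record at the outset the identification $[D_{\rho_j}]=e_j$ in $H_2(X_P;\QQ)$: under $A^1(X_P)_\QQ\cong H_2(X_P;\QQ)$ the class $[D(\rho_j)]$ is sent to the fundamental class of the subvariety $D(\rho_j)=\pi^{-1}(F_j)=S^2_j$, and the orientation of $S^2_j$ fixed in Appendix~\ref{Appendix_alg_cel_basis} is chosen so that this matches $e_j$; likewise $[V]=[\mathrm{pt}]$ is the point class in $H_0(X_P;\QQ)$.

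I would treat degree $4$ first, equation~\eqref{eqn_divisor = e_i deg 4}. Since $H_0(X_P;\QQ)\cong\QQ$ is detected by pairing with $1\in H^0(X_P;\QQ)$, it suffices to check that $\langle 1,\PD(v)\rangle=\langle 1,[V]\rangle$. The left-hand side equals $\langle 1,v\cap f\rangle=\langle v\cup 1,f\rangle=\langle v,f\rangle=1$ by the adjunction and Kronecker duality, while the right-hand side is $\langle 1,[\mathrm{pt}]\rangle=1$ by the counit property of $H_0$. Hence $\PD(v)=[V]$, which is $v=\PD^{-1}([V])$.

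For degree $2$, equation~\eqref{eqn_divisor = e_i deg 2}, I would expand $\PD(u_i)\in H_2(X_P;\QQ)$ in the basis $\{e_j\}$, writing $\PD(u_i)=\sum_j\beta_{ij}e_j$, and isolate a single coefficient by pairing with the Kronecker-dual class $u_k$:
\[
\beta_{ik}=\langle u_k,\PD(u_i)\rangle=\langle u_k,u_i\cap f\rangle=\langle u_i\cup u_k,f\rangle=\langle c_{ik}v,f\rangle=c_{ik},
\]
using the adjunction and the defining relation $u_i\cup u_k=c_{ik}v$. Therefore $\PD(u_i)=\sum_j c_{ij}e_j=\sum_j c_{ij}[D_{\rho_j}]$, and applying the linear isomorphism $\PD^{-1}$ gives $u_i=\sum_j c_{ij}\PD^{-1}([D_{\rho_j}])$, as claimed.

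The main obstacle is not the algebra, which is a short adjunction computation, but pinning down the sign-free identification $[D_{\rho_j}]=e_j$. A priori the cycle class $[D(\rho_j)]$ could differ from $e_j$ by a sign $\epsilon_j=\pm1$, depending on how the orientation of $S^2_j$ is fixed, and such signs would propagate into the coefficients as $\beta_{ik}=c_{ik}/\epsilon_k$, breaking the clean statement. The care needed is to verify that the orientations selected in Appendix~\ref{Appendix_alg_cel_basis} (those defining the algebraic cellular basis) agree with the complex orientations that the divisors $D(\rho_j)$ carry as subvarieties, so that $\epsilon_j=1$ for every $j$. I would also double-check the sign in the cap–cup adjunction against the explicit definition of $\PD$ in \eqref{eq_PD}, confirming that in the relevant degrees (the Koszul sign is $(-1)^{2\cdot 2}=1$ in the degree-$2$ case and trivial in the degree-$4$ case) no hidden sign intervenes.
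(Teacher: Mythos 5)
Your adjunction computation is correct, and it is in substance the same as the paper's argument: the paper's definition of $\PD$ via the diagonal is exactly cap product with $f$, and your pairing $\langle u_k,\PD(u_i)\rangle=\langle u_i\cup u_k,f\rangle=c_{ik}$ reproduces the paper's Lemma~\ref{prop matrix rep PD} ($\PD(u_i)=\sum_j c_{ij}e_j$), while your degree-$4$ computation matches the proof of \eqref{eqn_divisor = e_i deg 4} given in the main text. The problem is the step you ``record at the outset'': the identification $[D_{\rho_j}]=e_j$ \emph{with sign $+1$}. You present this as if the orientations in Appendix~\ref{Appendix_alg_cel_basis} were \emph{chosen} to match the complex orientations of the divisors, but that is not how the cellular basis is defined. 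The orientations of the $S^2_i$'s are fixed purely algebraically, via the conditions $\Psi(u^\triangle_{(a_i,b_i)})=[a_ib_iy_1]$ in \eqref{eq_u_i_orientation}, the condition \eqref{eq_Theta_u_square} in the degenerate cases $\lambda_i=(-1,0)$ or $(0,-1)$, and pullback along the rescaling morphism $\tilde\kappa$ in the general case. There is no a priori reason these algebraic choices agree with the complex orientations carried by the subvarieties $D(\rho_j)$, and if any $\epsilon_j=-1$ the lemma as stated is false, as you yourself observe.

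This sign verification is not a routine check to be ``double-checked''; it is the actual content of the lemma and occupies most of the paper's Appendix~\ref{appendix_B}. The paper establishes it by showing $\Phi(u_i)=\left[\sum_{j=1}^n c_{ij}x_j\right]$ in the Stanley--Reisner presentation \eqref{Eq_chow_SR_surface}, through a case analysis: rays with $a_ib_i\neq 0$ (via the map $\eta_i\colon X_P\to X_{(a_i,b_i)}$ and \cite[Lemma 4.8]{FSS2}), the degenerate rays $\lambda_i=(-1,0)$ or $(0,-1)$ (via $\eta_{12}$ and, when no smooth vertex is adjacent, a rescaling morphism $\sigma\colon X_P\to X_P'$), and finally the general polygon without a smooth fixed point (via $\tilde\kappa$ and the factor $g_i/(a_{n+2}b_{n+2})$). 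Combined with $(\Phi\circ\PD^{-1})([D_{\rho_i}])=[x_i]$ and the invertibility of $M_{cup}(X_P)$, this yields $e_i=[D_{\rho_i}]$ on the nose. Your proposal reduces the lemma to exactly this statement but then assumes it, so the proof has a genuine gap precisely where the difficulty lies.
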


\begin{proof}
Here we prove~\eqref{eqn_divisor = e_i deg 4}. 
The proof of~\eqref{eqn_divisor = e_i deg 2} will be given in Appendix \ref{appendix_B}.

By definition, the rational equivalence class $[V]$ is $[V(\sigma)]$ for any $2$-dimensional cone $\sigma$ in $\Sigma$, whose corresponding subvariety in $X_P$ is a single point. Therefore, the homology class $[V]$ is a generator of $H_0(X_P;\QQ)$, and we have
\[
\Delta_*(f)=[V]\otimes f+f\otimes[V]+\sum_{i,j=1}^n d_{ij}e_i\otimes e_j
\]
for some $d_{ij}\in \QQ$ and a set of basis $\{e_1,\ldots, e_n\}\subset H_2(X_P;\QQ)$.
Apply $\PD$ to $v$ to~get
\[\begin{split}
\PD(v)&=(\langle ~,~\rangle\otimes id)(v\otimes \Delta_* f)
=\langle v,f\rangle\otimes [V].
\end{split}
\]
Since $v\in H^4(X_P;\QQ)$ is the Kronecker dual of the fundamental class $f\in H_4(X_P;\QQ)$, the evaluation $\langle v,f\rangle=1$ and hence $\PD(v)=[V]$. The fact that $\PD$ is an isomorphism then implies~\eqref{eqn_divisor = e_i deg 4}. 
\end{proof}

Below, we state our main result. For clarification, we will first summarize our notation again. Let $X_P$ be a toric surface associated with a lattice polygon $P$ in~$M\otimes_\ZZ \RR$. For the set $\Sigma^{(1)}=\{\rho_1, \dots, \rho_{n+2}\}$ of $1$-dimensional cones of the normal fan $\Sigma$,  we write the primitive vector generating $\rho_i$ as $\lambda_i=(a_i,b_i)\in N(\cong \ZZ^2)$. By a suitable change of basis of $N$, we assume that 
\begin{equation}\label{eqn_thm condition on lambda}
\lambda_{n+1}=(1,0).    
\end{equation}

\begin{theorem}\label{thm_main}
Let $X_P$ be a toric surface associated with a lattice polygon $P$ satisfying Condition~\eqref{eqn_thm condition on lambda}.
Let $M_{int}(X_P)$ be the intersection product matrix with respect to the basis
\[
\{[D(\rho_1)],\ldots,[D(\rho_n)];[V]\}\subset A^{>0}(X_P)_{\QQ}
\]
in~\eqref{eq_M_int_mij}, and let $M_{cup}(X_P)$ be the cellular cup product matrix with respect to the cellular basis
\[
\{u_1,\ldots,u_n;v\}\subset \widetilde{H}^*(X_P;\QQ)
\]
defined in Appendix~\ref{Appendix_alg_cel_basis}. Then we have
\[
M_{int}(X_P)\cdot M_{cup}(X_P)=I.
\]
\end{theorem}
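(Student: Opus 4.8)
The plan is to reduce the statement to a short piece of linear algebra built on the two facts already recorded in this section. Write $C = M_{cup}(X_P) = (c_{ij})$ and $M = M_{int}(X_P) = (m_{ij})$; both are symmetric $n \times n$ matrices. First I would note that Lemma~\ref{lemma_PD_of_x_i} expresses the cellular basis in terms of the Poincar\'e dual basis,
\[
u_i = \sum_{k=1}^n c_{ik}\, \PD^{-1}([D_{\rho_k}]), \qquad v = \PD^{-1}([V]),
\]
so that $C$ is precisely the change-of-basis matrix from the Poincar\'e dual basis to the cellular basis in degree $2$. Second, Equation~\eqref{eqn intersection matrix for Poincare basis} records that the Poincar\'e dual basis elements multiply according to $M$, namely $\PD^{-1}([D_{\rho_k}]) \cup \PD^{-1}([D_{\rho_l}]) = m_{kl}\, v$.

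The key step is then a direct bilinear expansion. Substituting the expressions for $u_i$ and $u_j$ and using bilinearity of the cup product together with the multiplication rule for the Poincar\'e dual basis, I compute
\[
u_i \cup u_j = \sum_{k,l=1}^n c_{ik}\, c_{jl}\, \bigl(\PD^{-1}([D_{\rho_k}]) \cup \PD^{-1}([D_{\rho_l}])\bigr) = \Bigl(\sum_{k,l=1}^n c_{ik}\, m_{kl}\, c_{jl}\Bigr) v.
\]
On the other hand, by the defining property of the cellular cup product matrix (Definition~\ref{def_M_cup}), $u_i \cup u_j = c_{ij}\, v$. Comparing coefficients of $v$ yields $c_{ij} = \sum_{k,l} c_{ik}\, m_{kl}\, c_{jl}$, which in matrix form, using that $C$ is symmetric so that $C^T = C$, reads $C = C M C$.

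To finish, I would invoke invertibility of $C$: since both the cellular basis $\{u_1,\ldots,u_n\}$ and the Poincar\'e dual basis $\{\PD^{-1}([D_{\rho_1}]),\ldots,\PD^{-1}([D_{\rho_n}])\}$ are genuine bases of $H^2(X_P;\QQ)\cong\QQ^n$, the transition matrix $C$ relating them is invertible; were it singular, the $u_i$ would be linearly dependent, a contradiction. Multiplying $C = C M C$ on the left by $C^{-1}$ then gives $I = M C = M_{int}(X_P)\cdot M_{cup}(X_P)$, as desired.

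I expect the only genuinely subtle point in the main argument to be the justification that $C$ is invertible; everything else is a mechanical consequence of Lemma~\ref{lemma_PD_of_x_i} and Equation~\eqref{eqn intersection matrix for Poincare basis}. Invertibility cannot be read off from the relation $C = CMC$ alone, so it must be argued independently from the two-bases observation above. (The substantive geometric content is in fact concentrated in Lemma~\ref{lemma_PD_of_x_i}, whose degree-$2$ part is deferred to the appendix.) As a consistency check one can confirm the identity directly on Examples~\ref{ex_surface} and~\ref{ex_M_cup}; and a purely computational proof is also available by inserting the closed formulas \eqref{eq_intersection_prod_2d} and \eqref{eq_Mcup_entry} into the product and simplifying via the wall relations \eqref{eq_wall_rel}, though this route is considerably messier than the conceptual argument above.
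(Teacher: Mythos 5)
Your proposal is correct and follows essentially the same route as the paper's own proof: expand $u_i\cup u_j$ via Lemma~\ref{lemma_PD_of_x_i} and Equation~\eqref{eqn intersection matrix for Poincare basis} to obtain $M_{cup}(X_P)=M_{cup}(X_P)\cdot M_{int}(X_P)\cdot M_{cup}(X_P)$, then cancel $M_{cup}(X_P)$ using its invertibility. Your explicit justification that $M_{cup}(X_P)$ is invertible (as the transition matrix between two genuine bases of $H^2(X_P;\QQ)$) is a point the paper asserts without comment, so your write-up is if anything slightly more complete.
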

\begin{proof}
For $1\leq i, j\leq n$, let $c_{ij}$ and $m_{ij}$ be the $(i,j)$-entries of $M_{cup}(X_P)$ and $M_{int}(X_P)$, respectively. Consider the following string of equalities
\begin{eqnarray*}
c_{ij}v &=&u_i\cup u_j\\
&=&\left(\sum^n_{k=1}c_{ik}\PD^{-1}
([D_{\rho_k}])\right)\,\cup\,\left(\sum^n_{l=1}c_{jl}\PD^{-1}([D_{\rho_l}])\right)\\
&=&\sum^n_{k,l=1}c_{ik}c_{jl}\cdot
\PD^{-1}([D_{\rho_k}])\cup\PD^{-1}([D_{\rho_l}])\\
&=&\sum^n_{k,l=1}c_{ik}c_{jl} m_{kl}\PD^{-1}([V])\\
&=&\sum^n_{k,l=1}c_{ik}c_{jl} m_{kl}v
\end{eqnarray*}
where the second and the last lines are due to Lemma~\ref{lemma_PD_of_x_i} and the fourth line is due to \eqref{eqn intersection matrix for Poincare basis}.
Comparing the coefficients on both sides, we rewrite the last equality as
\[
M_{cup}(X_P)=M_{cup}(X_P)\cdot M_{int}(X_P)\cdot M_{cup}(X_P)
\]
Since $M_{cup}(X_P)$ is invertible, we obtain
\[
M_{int}(X_P)\cdot M_{cup}(X_P)=I.\qedhere
\]
\end{proof}

\appendix
\section{Construction of the cellular bases}\label{Appendix_alg_cel_basis}
In this appendix, we outline the construction of the cellular basis over $\QQ$, which gives the desired entries of $M_{cup}(X_P)$ in Theorem~\ref{thm_M_cup}. The readers are referred to~\cite[Section 6]{FSS2} for more details.

First, recall the definition of the degree-4 generator $v\in H^4(X_P;\QQ)$. Orient $P$ and $T^2$ as subspaces of $\RR^2$ and $\CC^2$ respectively, and orient $X_P$ via the orbit map $$P\times T^2\to X_P.$$
Let $[X_P]\in H_4(X_P;\ZZ)$ be the fundamental class of $X_P$ with respect to this orientation. Define $v\in H^4(X_P;\QQ)$ to be the Kronecker dual of $f=[X_P]\otimes 1\in H_4(X_P;\QQ)$.

For the cellular basis $\{u_1, \dots, u_n\}$ of $H^2(X_P;\QQ)$, we first consider the case when
\begin{equation}\label{eq_smooth_vert}
\lambda_{n+1}=(1,0) \quad \text{and}\quad \lambda_{n+2}=(0,1).
\end{equation}
In this case, the homotopy cofibration \eqref{eq_cofib} becomes 
\[
S^3 \to \bigvee_{i=1}^n S^2_i \to X_P.
\]
Recall that $S^2_i$ is the 2-sphere fixed by the circle subgroup of $T^2$ generated by $\lambda_i=(a_i, b_i)$. The homology class $e_i\in H_2(X_P;\QQ)$ corresponds to $[S^2_i]\in H_2(S^2_i;\QQ)$, and $u_i\in H^2(X_P;\QQ)$ is the Kronecker dual to $e_i$. The only ambiguity arises from the choice of orientation on $S^2_i$, which determines the sign of $e_i$.

For each $i=1,\ldots,n$, we define the basis element $u_i$ based on two cases:
(1) when $\lambda_i=(a_i, b_i)$ satisfies $a_ib_i\neq 0$, and (2) when $a_ib_i=0$.
The strategy involves analysing the following diagram of homotopy cofibrations
\begin{equation}\label{eq_diag_cofib_pinch}
\begin{tikzcd}
S^3 \arrow{r}\arrow{d}{=} & \bigvee_{i=1}^n S^2_i \arrow{d}{\text{pinch}} \arrow{r} &X_P\arrow{d}{\eta_i} \\
S^3 \arrow{r}{f} & S^2_i  \arrow{r}& Y
\end{tikzcd}
\end{equation}
where $Y$ is the homotopy cofiber of $f$ and $\eta_i$ is an induced map.
 Note that $Y$ may not be a toric variety, 
in fact:
 \begin{enumerate}
     \item If $a_ib_i\neq 0$, $Y$ is a toric orbifold (see~\cite[Section 7]{DJ});
     \item If $a_ib_i= 0$, $Y$ is a degenerate toric space (see~\cite[Section 5]{FSS2}).
 \end{enumerate}
The cellular basis element $u_i\in H^2(X_P;\QQ)$ is then defined to be
\begin{equation}\label{eq_u_i_aibi_nonzero}
u_i=\eta^*_i(u^{\triangle}_{(a_i,b_i)}),
\end{equation}
where $u^{\triangle}_{(a_i,b_i)}$ is an integral generator of $H^2(Y;\ZZ)$ defined below.

Now we define $u^{\triangle}_{(a_i,b_i)}$ for the first case, where $\lambda_i=(a_i, b_i)$ satisfies $a_ib_i\neq 0$. Let~$\triangle$ be the triangle in $\RR^2$ with facets labeled by  $(a_i, b_i), (1,0)$ and $(0,1)$, as described in Figure \ref{fig_char_pair_X_tri_sq}(A). 
This labeling is a \emph{rational characteristic function} in the sense of \cite[Definition 3.1]{BSS}; see also \cite[Section 7]{DJ}. Using this, one can define a \emph{toric orbifold}
\[
X_{(a_i, b_i)} \colonequals  \triangle \times T^2 / _\sim
\]
where the equivalence relation $\sim$ is as in~\eqref{eq_top_model}.  In this case, we have $Y=X_{(a_i, b_i)}$ and the bottom row of~\eqref{eq_diag_cofib_pinch} becomes the homotopy cofibration
\[
S^3 \to S^2_{i} \to X_{(a_i, b_i)}.
\]
We define $u^\triangle_{(a_i, b_i)}\in H^2(X_{(a_i,b_i)};\ZZ)$ as the cohomology class corresponding to $S^2_i$. 
Since there is no canonical choice on the orientation of $S^2_i$, the element 
$u^\triangle_{(a_i, b_i)}$ is defined up to sign. Below, we fix this choice using an algebraic method.

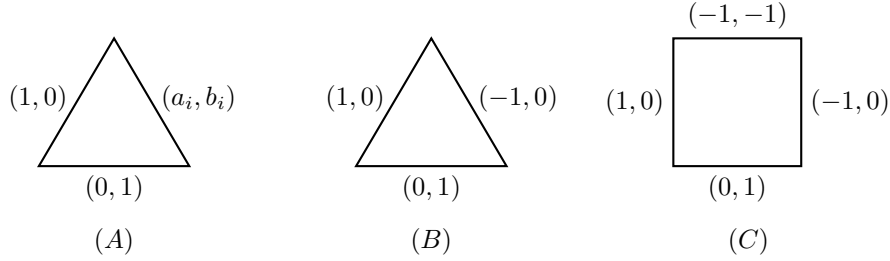
\begin{figure}
\begin{tikzpicture}
\draw[thick] (0,0)--(2,0)--(1,1.7)--cycle;
\node[left] at (1/2, 0.9) {$(1,0)$};
\node[right] at (3/2, 0.9) {$(a_i, b_i)$};
\node[below] at (1,0) {$(0,1)$};
\node  at (1,-1) {$(A)$};

\begin{scope}[xshift=120]
\draw[thick] (0,0)--(2,0)--(1,1.7)--cycle;
\node[left] at (1/2, 0.9) {$(1,0)$};
\node[right] at (3/2, 0.9) {$(-1,0)$};
\node[below] at (1,0) {$(0,1)$};
\node  at (1,-1) {$(B)$};
\end{scope}

\begin{scope}[xshift=240]
\draw[thick] (0,0)--(1.7,0)--(1.7,1.7)--(0,1.7)--cycle;
\node[below] at (1.7/2, 0) {$(0,1)$};
\node[above] at (1.7/2, 1.7) {$(-1,-1)$};
\node[left] at (0,1.7/2) {$(1,0)$};
\node[right] at (1.7,1.7/2) {$(-1,0)$};
\node  at (1,-1) {$(C)$};
\end{scope}

\end{tikzpicture}
\caption{(A) A rational characteristic pair; (B) a degenerate characteristic pair; (C) a characteristic pair.}
\label{fig_char_pair_X_tri_sq}
\end{figure}

Following the result of \cite[Corollary 4.12]{DKS}, together with similar calculations provided in \cite[Example 5.4]{BSS}, there is an injective homomorphism
\begin{equation*}\label{eq_Psi_cohom_Xab}
\Psi \colon H^\ast(X_{(a_i, b_i)};\ZZ) \to \QQ[y_1, y_2, y_3]/\left<y_1y_2y_3, a_iy_1+y_2, b_iy_2+ y_3\right>
\end{equation*}
where the image of $\Psi$ is generated by $[a_ib_iy_1]$ in degree $2$ and $[y_2y_3]$ in degree $4$.  
We define the element $u^\triangle_{(a_i, b_i)}\in H^2(X_{(a_i, b_i)};\ZZ)$ by
\begin{equation}\label{eq_u_i_orientation}
\Psi(u^\triangle_{(a_i, b_i)})=[a_ib_iy_1].
\end{equation}

Next, we consider the second case, where $\lambda_i=(a_i,b_i)$ satisfies $a_ib_i=0$. By the primitivity of $\lambda_i$, it suffices to assume $\lambda_i=(a_i, b_i)$ is $(-1, 0)$ or $(0,-1)$. 
For $\lambda_i=(-1,0)$, let $\triangle$ be the triangle in $\RR^2$ with facets assigned $(1,0), (0,1)$ and $(-1,0)$, as described in Figure \ref{fig_char_pair_X_tri_sq}\,(B). The
two edges intersecting the top vertex are assigned parallel vectors, which yields a \emph{degenerate characteristic function} in the sense of~\cite[Definition 5.1]{FSS2}. From this setup, one can construct the space
\begin{equation*}\label{eq_cofib_deg_toric}
Y_{(-1,0)}\colonequals \triangle\times T^2 /_{\sim_d}
\end{equation*}
where $\sim_d$ is defined similarly  to~\eqref{eq_top_model} together with an additional condition collapsing $\{v\}\times T^2$ to a point for each vertex $v$ of $\triangle$. 
It is shown in~\cite[Remark 5.9]{FSS2} that~$Y_{(-1,0)}$ is homotopy equivalent to $S^4\vee S^2$. 

Furthermore, in the bottom row of~\eqref{eq_diag_cofib_pinch}, we have $Y=Y_{(-1,0)}$ and the homotopy cofibration
\[
S^3\to S^2 \to Y_{(-1,0)}\simeq S^4\vee S^2.
\]
Let $u^\triangle_{(-1,0)}\in H^2(Y_{(-1,0)};\ZZ)$ be the cohomology class corresponding to $S^2$. 
Note that it cannot be defined directly via an analogue of~\eqref{eq_u_i_orientation}, because
$Y_{(-1,0)}$ is not a toric orbifold and the homomorphism $\Psi$ does not exist.
Instead, we define $u^\triangle_{(-1,0)}$ using a pullback construction from another toric orbifold.

Let $\Box$ be the square with vectors assigned on its facets as described in Figure~\ref{fig_char_pair_X_tri_sq}\,(C). The
corresponding space 
\[
X_{(-1,0),(-1,-1)}\colonequals \Box \times T^2/_\sim
\]
is homeomorphic to $\CP^2\# \overline{\CP^2}$; see~\cite[Example 1.19]{DJ}. 
Consider the diagram
\[
\begin{tikzcd}
S^3 \arrow{r}\arrow{d}[swap]{=}&  S^2_{(-1,0)}\vee S^2_{(-1,-1)}  \arrow{r} \arrow{d}[swap]{p}    &X_{(-1,0), (-1,-1)}\arrow{d}[swap]{\eta}\\
S^3 \arrow{r} &S^2_{(-1,0)} \arrow{r} & Y_{(-1,0)}
\end{tikzcd} 
\]
where the rows are homotopy cofibration sequences. The spheres $S^2_{(-1,0)}$ and $S^2_{(-1,-1)}$ are the $2$-spheres embedded in $X_{(-1,0),(-1,-1)}$ that are fixed by the circle subgroups of $T^2$ generated by $(-1,0)$ and $(-1,-1)$, respectively.
From \cite[Theorem 4.14]{DJ} or \cite{Dan, Jur}, we have a ring isomorphism 
\[
\Theta \colon H^\ast(X_{(-1,0),(-1,-1)};\ZZ) \to \frac{\ZZ[z_1, z_2, z_3, z_4]}{ \left<z_1z_3,z_2z_4, -z_1-z_2+z_3, -z_2+z_4\right>}. 
\]
Define $u^{\triangle}_{(-1,0)}\in H^2(Y_{(-1,0)};\ZZ)$ by the equation
\begin{equation}\label{eq_Theta_u_square}
\left(\Theta\circ\eta_1^*\right)(u^\triangle_{(-1,0)})= [z_2] (=[z_4] = [-z_1+z_3]).
\end{equation}

When $\lambda_i=(0,-1)$,  define $u_{(a_i,b_i)}^\triangle$ by repeating the above procedure with $\triangle$ assigned $(1,0), (0,1)$ and $(0,-1)$, and $\Box$ assigned $(1,0), (0,1)$ and $(-1,-1), (0,-1)$. This completes defining $u_i\in H^2(X_P;\QQ)$ using \eqref{eq_u_i_aibi_nonzero} under the assumption~\eqref{eq_smooth_vert}.

\begin{remark}\label{rmk_appendix}
One can also consider the toric manifold corresponding to the rectangle~$\Box$ with the assignment $(1,0), (0,1)$ and $(-1,0), (0,-1)$ on each facet of~$\Box$ to specify orientations of the invariant $2$-spheres $S^2_{(-1,0)}$ and $S^2_{(0,-1)}$. This convention is compatible with the definitions described above. The readers are referred to \cite[Subsection 6.1]{FSS2}.
\end{remark}

Finally, we define $u_i\in H^2(X_P;\QQ)$ in general case. One can still assume that $\lambda_{n+1}=(1,0)$ by an appropriate choice of the basis of the lattice $N$. Then, we consider a linear endomorphism $\kappa$ on $N$ defined by 
\[
\kappa(x,y)=(b_{n+2}x-a_{n+2}y, a_{n+2}y),
\] 
which determines a new fan $\Sigma'$ with rays spanned by $\kappa(\lambda_i)$ for $i=1, \dots, n+2$. 
Note that the primitive vectors generating $\kappa(\lambda_{n+1})$ and $\kappa(\lambda_{n+2})$ are $(1,0)$ and $(0,1)$, respectively. Therefore, the toric variety $X_{\Sigma'}$ satisfies the condition \eqref{eq_smooth_vert} and one can define the cellular basis $\{u_1',\dots, u_n'\}$ of $H^2(X_{\Sigma'};\QQ)$. 

The linear map $\kappa$ induces a morphism $\Sigma\to \Sigma'$ of fans, which induces a morphism of toric varieties 
\begin{equation}\label{eq morphism kappa}
\tilde \kappa \colon X_\Sigma \to X_{\Sigma'};
\end{equation}
see~\cite[Section 3.3]{CLS}.
Define
\[
u_i=\frac{g_i}{a_{n+2}b_{n+2}}\tilde{\kappa}^*(u'_i)
\]
where $g_i=\gcd(a_ib_{n+2},a_{n+2}b_i)$.
This definition implies that the orientations of the~$S^2_i$'s in $X_P$ are given by those of $S^2_i$'s in $X_{\Sigma'}$.
The readers are referred to \cite[Section 7]{FSS2} for more details.

\section{Proof of Lemma \ref{lemma_PD_of_x_i}}\label{appendix_B}
Here we complete the proof of Lemma~\ref{lemma_PD_of_x_i} by showing Equation~\eqref{eqn_divisor = e_i deg 2}. To begin with, we study the images of the cellular basis elements $u_1,\ldots,u_n\in H^2(X_P;\QQ)$ under the Poincar\'{e} duality $\PD\colon H^2(X_P;\QQ)\to H_2(X_P;\QQ)$.
Let $e_1,\ldots,e_n\in H_2(X_P;\QQ)$ be the Kronecker duals of $u_1,\ldots,u_n$. That is, they are defined by the equations
\[
u_i(e_j)=\delta_{ij}.
\]
We call the set $\{e_1,\ldots,e_n\}$ the \emph{Kronecker dual basis} for $H_2(X_P;\QQ)$.

\begin{lemma}\label{prop matrix rep PD}
Let $X_P$ be the toric surface associated with a lattice polygon $P$ that satisfies Condition~\eqref{eqn_thm condition on lambda}.
Let $u_1, \dots, u_n\in H^2(X_P, \mathbb{Q})$ be the degree-2 cellular basis elements, and let $e_1, \dots, e_n\in H_2(X_P;\mathbb{Q})$ be the Kronecker dual basis.
Then
\begin{equation*}\label{eqn_PD_matrix_repr}
\PD(u_i)=\sum_{j=1}^n c_{ij}e_j
\end{equation*}
where $c_{ij}$ is the $(i,j)$-entry of the cellular cup product matrix $M_{cup}(X_P)$ in \eqref{eq_Mcup_entry}. 
\end{lemma}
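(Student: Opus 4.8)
The plan is to compute $\PD(u_i)$ directly from the definition of Poincaré duality as a composite and expand the result in the Kronecker dual basis $\{e_1,\dots,e_n\}$. Since $\PD(u_i)\in H_2(X_P;\QQ)$, I can write $\PD(u_i)=\sum_{j=1}^n \alpha_{ij}e_j$ for some coefficients $\alpha_{ij}\in\QQ$, and the goal is to show $\alpha_{ij}=c_{ij}$. The natural way to extract each coefficient is to pair against the cellular basis elements $u_k$ using the evaluation $\langle\,,\,\rangle$: because $u_k(e_j)=\delta_{kj}$ by definition of the Kronecker dual basis, we have $\langle u_k,\PD(u_i)\rangle=\alpha_{ik}$. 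So it suffices to compute $\langle u_k,\PD(u_i)\rangle$ and identify it with $c_{ik}$.

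The key computation is to unwind $\langle u_k,\PD(u_i)\rangle$ through the definition of $\PD$ given in Section~\ref{sec_cellular and poincare}. Tracing $u_i$ through the composite, $\PD(u_i)=(\langle\,,\,\rangle\otimes\mathrm{id})(u_i\otimes\Delta_* f)$, where $f$ is the fundamental class. Pairing with $u_k$ and using the standard compatibility between the cup product, the diagonal comultiplication $\Delta_*$, and the evaluation pairing, the quantity $\langle u_k,\PD(u_i)\rangle$ should equal $\langle u_i\cup u_k,f\rangle$. This is the familiar identity that Poincaré duality turns the intersection pairing into the cup product pairing against the fundamental class. Once this is established, I invoke Definition~\ref{def_M_cup}: $u_i\cup u_k=c_{ik}v$, and since $v$ is the Kronecker dual of $f$, we get $\langle u_i\cup u_k,f\rangle=c_{ik}\langle v,f\rangle=c_{ik}$. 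Combining, $\alpha_{ik}=c_{ik}$, which is exactly the claim.

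The main obstacle is carefully verifying the identity $\langle u_k,\PD(u_i)\rangle=\langle u_i\cup u_k,f\rangle$ at the level of the explicit comultiplication formula. I would write $\Delta_*(f)=[V]\otimes f+f\otimes[V]+\sum_{p,q}d_{pq}e_p\otimes e_q$ as in the proof of the degree-4 case (Lemma~\ref{lemma_PD_of_x_i}), where the mixed middle terms in $H_2\otimes H_2$ carry the interesting information. Applying $(\langle\,,\,\rangle\otimes\mathrm{id})$ to $u_i\otimes\Delta_*(f)$, the pairing $\langle u_i,-\rangle$ kills the $[V]\otimes f$ and $f\otimes[V]$ terms by degree reasons (since $u_i\in H^2$ pairs nontrivially only with $H_2$), leaving $\PD(u_i)=\sum_{p,q}d_{pq}\langle u_i,e_p\rangle e_q=\sum_q d_{iq}e_q$. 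Thus $\alpha_{iq}=d_{iq}$, and the task reduces to showing $d_{iq}=c_{iq}$. This in turn follows because the coefficients $d_{pq}$ of the $H_2\otimes H_2$ part of $\Delta_*(f)$ are precisely the structure constants dual to the cup product: evaluating $u_i\cup u_k$ on $f$ via $\langle u_i\otimes u_k,\Delta_*(f)\rangle=d_{ik}$ on one hand and $=c_{ik}$ on the other (using $u_i\cup u_k=c_{ik}v$ and $\langle v,f\rangle=1$) forces $d_{ik}=c_{ik}$. The delicate point to get right is the bookkeeping of which tensor factor the evaluation acts on and confirming the degree-vanishing of the extremal terms, but no hard new input beyond the definitions and the symmetry of $M_{cup}(X_P)$ is needed.
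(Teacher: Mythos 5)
Your proposal is correct and takes essentially the same approach as the paper: both arguments rest on the identity $\langle u_i\otimes u_k,\Delta_*(f)\rangle=\langle u_i\cup u_k,f\rangle=c_{ik}$ to identify the $H_2\otimes H_2$ coefficients of $\Delta_*(f)$ with the entries of $M_{cup}(X_P)$, and then read off $\PD(u_i)=\sum_{j}c_{ij}e_j$ from the explicit composite defining $\PD$, with the extremal terms of $\Delta_*(f)$ killed for degree reasons. The only difference is the order of steps (the paper pins down $\Delta_*(f)$ first and then computes $\PD(u_k)$, while you expand $\PD(u_i)$ with unknown coefficients and then identify them), which is immaterial.
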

\begin{proof}
Consider the string of equations
\[
\left\langle{u_k\otimes u_l,\Delta_*(f)}\right\rangle
=\left\langle{\Delta^*(u_k\otimes u_l),f}\right\rangle
=\left\langle{u_k\cup u_l,f}\right\rangle
=\left\langle{c_{kl}v,f}\right\rangle
=c_{kl}.
\]
The second equality is due to the definition of cup products, the third equality is due to Definition~\ref{def_M_cup}~(2), and the last equality is due to the fact that $v\in H^4(X_P;\QQ)$ is the Kronecker dual of the fundamental class $f\in H_4(X_P;\QQ)$.
This implies
\[
\Delta_*(f)=1\otimes f+f\otimes 1+\sum^n_{i,j=1}c_{ij}e_i\otimes e_j.
\]
Applying the Poincar\'e duality $\PD$ in \eqref{eq_PD} to $u_k$, we obtain 
\begin{align*}
\PD(u_k)&=\left( (\langle ~,~ \rangle\otimes id)\circ (id\otimes \Delta_\ast)\right) (u_k\otimes f) \label{eq_base_change_calculation} \\
&= (\langle ~,~ \rangle\otimes id)
\left( u_k\otimes\left(1\otimes f+f\otimes 1+\sum^n_{i,j=1} c_{ij} e_i\otimes e_j\right) \right)  \\
&=\sum^n_{i,j=1}\langle{u_k,c_{ij}e_i}\rangle\otimes e_j =\sum^n_{i,j=1}c_{ij}\delta_{ki} e_j=\sum^n_{j=1}c_{kj}e_j.
\end{align*}
Hence, the result follows. 
\end{proof}

Due to Lemma~\ref{prop matrix rep PD}, proving Equation~\eqref{eqn_divisor = e_i deg 2} is equivalent to showing
\begin{equation}\label{eqn_divisor = e_i}
[D_{\rho_i}]=e_i
\end{equation}
for $1\leq i\leq n$, where $[D_{\rho_i}]$ is regarded as the homology class of the subvariety corresponding to $D_{\rho_i}$.
One can see that this subvariety is homeomorphic to the 2-sphere $\pi^{-1}(F_i)\simeq S^2_i$ in~\eqref{eq_cofib}, which is represented by $e_i$. Therefore, \eqref{eqn_divisor = e_i} holds up to sign. To determine the sign, one compares the orientations of $S^2_i$ corresponding to $D_{\rho_i}$ and $e_i$. The subvariety $D_{\rho_i}$ is equipped with the orientation induced from the orientation of $X_P$ as a complex submanifold, 
while the orientation of $e_i$ is interpreted as the choice of its dual generator $u_i\in H^2(X_P;\QQ)$ which is defined in Appendix~\ref{Appendix_alg_cel_basis}.
The following computation uses algebraic methods to show that these two orientations are the same.

Consider the composite
\[
H_2(X_P;\mathbb{Q}) \xrightarrow{\PD^{-1}} 
H^2(X_P;\mathbb{Q}) \xrightarrow{\Phi}
\mathbb{Q}[x_1, \dots, x_{n+2}]/ \left(\mathcal{I}+\mathcal{J}\right),
\]
where $\Phi$ is the restriction of isomorphism~$\varphi$ to $H^2(X_P;\QQ)$ (see~\eqref{Eq_chow_SR_surface}). Note that 
\[
(\Phi\circ \PD^{-1})([D_{\rho_i}]) = [x_i];
\]
see \eqref{eq_chow_to_singular},  \eqref{eq_rational_cohom_SR_presentation} and \eqref{Eq_chow_SR_surface}. Hence, we prove the assertion by showing that 
\begin{equation*}\label{eq_Lemma_subclaim}
(\Phi\circ \PD^{-1})(e_i)=[x_i]
\end{equation*}
for $i=1, \dots, n$. Since we have 
\[
\PD^{-1}(e_i)= \sum_{j=1}^n d_{ij}u_j 
\]
for the entries $d_{ij}$ of $M_{cup}(X_P)^{-1}$ by Lemma \ref{prop matrix rep PD}, it suffices to verify that 
\begin{equation}\label{eq_Phi_u_i}
\Phi(u_i) =\left[  \sum_{j=1}^n c_{ij}x_j\right] 
\end{equation}
for the entries $c_{ij}$ of $M_{cup}(X_P)$ given in \eqref{eq_Mcup_entry}. We will first prove \eqref{eq_Phi_u_i} for the case where $X_P$ has at least one smooth fixed point. Then, we extend this result to general cases.

When $X_P$ has a smooth fixed points, we may assume that $\lambda_{n+1}=(1,0)$ and $\lambda_{n+2}=(0,1)$ by an appropriate change of the basis of $N$. First we consider $\rho_i$ generated by $\lambda_i=(a_i, b_i)$ with $a_ib_i\neq 0$. In this case, the map
 \[\text{$\eta_i\colon X_P \to X_{(a_i, b_i)}$}\]
 given in~\eqref{eq_diag_cofib_pinch} induces the diagram
\begin{equation*}\label{eq_diag_edge_cont}
\begin{tikzcd}
H^2(X_P;\mathbb{Q}) \rar{\Phi}& \frac{\mathbb{Q}[x_1, \dots, x_{n+2}]}{\mathcal{I}+\mathcal{J}}\\
H^2(X_{(a_i, b_i)};\mathbb{Q})\rar{\Psi}\uar{\eta_i^\ast} & \frac{\mathbb{Q}[y_1, y_2, y_3]}{\left<y_1y_2y_3, a_iy_1+y_2, b_iy_2+y_3\right>}.\uar{\widetilde{\eta}_i^\ast}\end{tikzcd}
\end{equation*} 
We refer to \cite[Section 3, Section 4.2]{FSS2}
for the commutativity of the above square. 
Hence, the cohomology class $u_i\in H^2(X_P, \mathbb{Q})$ satisfies that  
\[
\Phi(u_i)=\left(\Phi\circ \eta_i^\ast\right)(u_i^\triangle) =\left(\widetilde{\eta}_i^\ast \circ\Psi\right)(u_i^\triangle) = \left[  \sum_{j=1}^n c_{ij}x_j\right],
\]
where the first equality is due to the construction \eqref{eq_u_i_aibi_nonzero} and the last equality follows from \cite[Lemma 4.8]{FSS2}. 

Next, we consider $\rho_i$ generated by $\lambda_i=(-1,0)$. Since $\Sigma$ is a fan, there exists $j$ with $i< j \leq n$ such that $\lambda_j=(0, -1)$ or $\lambda_j=(a_j, b_j)$ with $a_jb_j\neq 0$ and $b_j<0$. For simplicity, we assume~$i=1$ and~$j=2$. When $\lambda_j=(0,-1)$, 
the cofibration $S^3 \to \bigvee_{i=1}^n S^2_i \to X_P$ and the pinch map $\bigvee_{i=1}^n S^2_i \to S^2_1 \vee S^2_2$ induce a map 
\[
\eta_{12}\colon X_P \to X_{(-1,0), (0,-1)}.
\]
Therefore, we have the following commutative diagram 
\[
\begin{tikzcd}
    H^\ast(X_{(-1,0), (0,-1)})
    \dar{\cong}[swap]{\Theta}\rar{\eta_{12}^\ast} & H^\ast(X_P) \dar{\cong}[swap]{\Phi} \\
    \frac{\mathbb{Q}[x_1, x_2, x_3, x_4]}{\left<z_1z_3,z_2z_4, -z_1+z_3, -z_2+z_4\right>} \rar{\widetilde{\eta}_{12}^\ast} &\frac{\mathbb{Q}[x_1, \dots, x_4]}{\mathcal{I} + \mathcal{J}}.
\end{tikzcd}
\] 
Denote by $u^\square_1\in H^\ast(X_{(-1,0), (0,-1)})$ the first cellular basis element~$u_1$ when $X_P$ is $X_{(-1,0), (0,-1)}$, as described in~Appendix~\ref{Appendix_alg_cel_basis}.
Recall from \eqref{eq_Theta_u_square} and Remark \ref{rmk_appendix} that the first vertical map $\Theta$ maps $u^\square_1$ to~$[x_2]$.
Then we have
\begin{align*}
\Phi(u_1)&=\left(\Phi\circ \eta_{12}^*\right)(u_1^\square)=\left(\widetilde{\eta}_{12}^*\circ \Theta\right)(u_1^\square)=\widetilde{\eta}_{12}^*([x_2])\\
&=[x_2-b_3x_3-\cdots - b_nx_n]
\end{align*}
where the last equality follows from computations similar to the proof of \cite[Lemma 4.8]{FSS2}. Thus we get \eqref{eq_Phi_u_i} for this case as well. 

Suppose $\lambda_j=(a_j, b_j)$ satisfies $a_jb_j\neq 0$ and $b_j<0$, regarding $i=1$ and $j=2$ again for simplicity. In that case, we consider the lattice $N'$ generated by $\left\{\left(\frac{1}{|b_2|}, 0\right), \left( 0, \frac{1}{|a_2|}\right)\right\}$, that is obtained by subdividing the first coordinate and the second coordinate of $N$ into~$|b_2|$ and~$|a_2|$ pieces, respectively. We note that $N$ is a sublattice of $N'$.

Given a lattice polytope $P\in M\otimes_\mathbb{Z} \mathbb{R}$ and its normal fan $\Sigma$ in $N\otimes_\mathbb{Z} \mathbb{R}$, one can consider $\Sigma$ as a fan in $N'\otimes_\mathbb{Z} \mathbb{R}$, which defines a toric variety, say $X_P'$. 
As $\Sigma$ is a fan and $(a_1,b_1)=(-1,0)$, we have $b_2<0$. 
Then the primitive vectors $\lambda_1', \lambda_2' \in N'$ generating $\rho_1$ and $\rho_2$ of $\Sigma$ as a fan in $N'\otimes_\mathbb{Z} \mathbb{R}$ are $(-1,0)$ and $(-1,-1)$. We denote by $(a_i', b_i')\in N'$ the primitive vector generating $\rho_i$ of $\Sigma$ as a fan in $N'\otimes_\mathbb{Z} \mathbb{R}$. To be more precise, since $(a_i, b_i)\in N$ corresponds to 
$(a_i\cdot |b_2|, b_i\cdot |a_2|)\in N'$, we have the following relation
\[
(a_i', b_i')=\left(\frac{a_i\cdot |b_2|}{g_i},\frac{ b_i\cdot |a_2|}{g_i}\right)
\]
for $g_i=\gcd\left\{|a_ib_2|, |b_ia_2|\right\}$ and $i=1,\dots,n$. Note that $g_1=|a_1b_2|$ as $b_1=0$.

Following \cite[Proposition 3.3.7]{CLS}, we have a morphism 
\[
\sigma \colon X_P \to X_P'.
\]
One can also understand the map $\sigma$ as a \emph{rescaling morphism} studied in \cite[Subsection 4.1]{FSS2}. 
Furthermore, the cofibration $S^3 \to \bigvee_{i=1}^n S^2_i \to X_P'$ and the pinch map $\bigvee_{i=1}^n S^2_i \to S^2_1 \vee S^2_2$ induce a map 
\[
\eta_{12}\colon X_P' \to X_{(-1,0), (-1,-1)}.
\]
Therefore, we have the following commutative diagram 
\begin{equation}\label{eq_comm_diag_rescale}
\begin{tikzcd}
    H^\ast(X_{(-1,0), (-1,-1)})
    \dar{\cong}[swap]{\Phi''}\rar{\eta_{12}^\ast} & H^\ast(X_P') \dar{\cong}[swap]{\Phi'}\rar{\sigma^\ast} & H^\ast(X_P) \dar{\cong}[swap]{\Phi}\\
    \frac{\mathbb{Q}[x_1, x_2, x_3, x_4]}{\left<z_1z_3,z_2z_4, -z_1-z_2+z_3, -z_2+z_4\right>} \rar{\widetilde{\eta}_{12}^\ast} &\frac{\mathbb{Q}[x_1, \dots, x_{n+2}]}{\mathcal{I}' + \mathcal{J}'} \rar{\widetilde{\sigma}^\ast} & \frac{\mathbb{Q}[x_1, \dots , x_{n+2}]}{\mathcal{I} + \mathcal{J}},
\end{tikzcd}
\end{equation}
where $\mathcal{I}',\mathcal{I}$ and $\mathcal{J}', \mathcal{J}$ are corresponding ideals as defined in \eqref{eq_rational_cohom_SR_presentation}. 
Again by \eqref{eq_Theta_u_square}, the left vertical map $\Phi''$ maps $u^\square_1\in H^\ast(X_{(-1,0), (-1,-1)})$ to $[x_2]$.

For the bottom composition, we have 
\begin{align*}
\left( \widetilde{\sigma}^\ast\circ \widetilde{\eta}_{12}^\ast\right)  ([x_2]) &=\widetilde{\sigma}^\ast([x_2-b_3'x_3-\cdots - b_n'x_n])\\
&= \left[g_2x_2 - b_3'g_3x_3-\cdots - b_n'g_nx_n\right]  \\
&= -|a_2|\left[b_2x_2+b_3x_3+\cdots + b_nx_n\right], 
\end{align*}
where the first and the second equalities follow from the computation given in the proof of \cite[Lemma 4.8]{FSS2} and from \cite[Lemma 4.4]{FSS2}, respectively. 
For the top composition, we have 
\[
(\sigma^\ast\circ \eta_{12}^\ast) (u_1^\square)=|a_2|u_1
\]
which follows from the result of \cite[Lemma 7.2]{FSS2}. 

Combining the computations above together with the commutativity of \eqref{eq_comm_diag_rescale}, we get 
\begin{align*}
\left(\Phi\circ\sigma^*\circ\eta_{12}^*\right)(u_1^\square)&=\left(\widetilde{\sigma}^*\circ \widetilde{\eta}_{12}^*\circ 
\Phi''\right) (u^\square_1) ; \\
|a_2|\Phi(u_1)&=-|a_2|\left[b_2x_2+b_3x_3+\cdots + b_nx_n\right]; \\
\Phi(u_1)&=-\left[b_2x_2+b_3x_3+\cdots + b_nx_n\right]\\
&=\left[a_1b_2x_2+a_1b_3x_3+\cdots + a_1b_nx_n\right],
\end{align*}
which also verifies \eqref{eq_Phi_u_i}. 

The computation for $\rho_i$ generated by $\lambda_i=(0,-1)$ is similar. 

Finally, we consider a toric variety $X_P$ which does not necessarily have a smooth fixed point. Since each $\lambda_i=(a_i, b_i)$ is primitive, we may assume that $\lambda_i=(1,0)$ and $\lambda_{n+2}=(a_{n+2},b_{n+2})$ with $a_{n+2}b_{n+2}\neq 0$ by a suitable change of the basis of $N$.

We recall $X_{\Sigma'}$ and the morphism $\tilde{\kappa}$ in~\eqref{eq morphism kappa}.
For $X_{\Sigma'}$, let $\lambda'_i=(a'_i,b'_i)$ be the primitive vector that spans the $i$-th ray of $\Sigma'$. Consider the ring isomorphism 
\[
\Phi'\colon H^*(X_{\Sigma'};\QQ)\to
\mathbb{Q}[x_1,\ldots,x_{n+2}]/\left(\mathcal{I}'+\mathcal{J}'\right).
\] 
Applying the preceding calculation for $X_P$ having a smooth fixed point, there are basis elements 
$\{u'_1,\ldots, u'_n\} \subset H^2(X_{\Sigma'};\QQ)$
such that 
\[
\Phi'(u'_i)=\left[\sum_{j=1}^ia'_jb'_ix_j+\sum_{j=i+1}^na'_ib'_{j}x_j\right].
\]

Since both $\Phi$ and $\Phi'$ are isomorphisms, we have a map  
\[
\Phi \circ \widetilde{\kappa}^\ast \circ \Phi'^{-1} \colon 
\mathbb{Q}[x_1, \dots, x_{n+2}]/\left(\mathcal{I}'+\mathcal{J}'\right)
\to 
\mathbb{Q}[x_1, \dots, x_{n+2}]/\left(\mathcal{I}+\mathcal{J}\right)
\]
which we simply denote by $\Upsilon$, namely we have $\Upsilon\circ \Phi' = \Phi \circ \widetilde{\kappa}^*$. We refer to \cite[Section 3]{FSS2}. 
We note that $u_i=\frac{g_i}{a_{n+2}b_{n+2}}\widetilde{\kappa}^*(u'_i)$ and 
$\Upsilon([x_i])=[g_ix_i]$
for $1\leq i\leq n$ by~\cite[Theorem 7.5]{FSS2}. 
Therefore, we have
\[
\begin{split}
    \Phi(u_i)&=\Phi\left(\frac{g_i}{a_{n+2}b_{n+2}}\widetilde{\kappa}^*(u'_i)\right)\\
    &=\frac{g_i}{a_{n+2}b_{n+2}}\left(\Upsilon\circ \Phi'\right) (u'_i)\\
    &=\frac{g_i}{a_{n+2}b_{n+2}}\Upsilon \left(
\left[ \sum_{j=1}^ia'_jb'_ix_j+\sum_{j=i+1}^na'_ib'_{j}x_j\right]\right) \\
    &=\frac{g_i}{a_{n+2}b_{n+2}}
\left[\sum_{j=1}^ia'_jb'_ig_jx_j+\sum_{j=i+1}^na'_ib'_{j}g_jx_j\right]\\
&=\left[\sum_{j=1}^n c_{ij}x_j\right]
\end{split}
\]
for $c_{ij}$'s defined in \eqref{eq_Mcup_entry}, which establishes \eqref{eq_Phi_u_i}.
This implies Equation~\eqref{eqn_divisor = e_i} and hence Equation~\eqref{eqn_divisor = e_i deg 2}. Now the proof of Lemma~\ref{lemma_PD_of_x_i} is complete.

%

\end{document}